\documentclass[a4paper,11pt]{amsart}
\usepackage{amssymb}
\usepackage{mathrsfs,cleveref}
\usepackage{tikz}
 \usepackage[arrow,matrix]{xy}
\usepackage{amsmath,amssymb,amscd,bbm,amsthm,mathrsfs}
\usepackage{amsfonts,enumerate}
\usepackage{tcolorbox}
\usepackage{xcolor}
\newtheorem{thm}{Theorem}[section]
\newtheorem{lem}{Lemma}[section]
\newtheorem{cor}{Corollary}[section]
\newtheorem{prop}{Proposition}[section]
\newtheorem{rem}{Remark}[section]
\newtheorem{de}{Definition}[section]
\newtheorem{ex}{Example}[section]

\theoremstyle{definition}
\numberwithin{equation}{section}

\begin{document}

 \title[Rank-$3$ Generalized Clifford Manifold and Its Twistor Space]{
Rank-$3$ Generalized Clifford Manifold and Its Twistor Space}
\author[G.-Z. Ren]{Guangzhen Ren}
%
\address{%
Department of Mathematics\\ Zhejiang International Studies University\\ Hangzhou 310023, China
}
\email{gzren@zisu.edu.cn}
\thanks{The first author is supported by National Natural Science Foundation of China (No.~12101564) and Scientific Research Fund of Zhejiang Provincial Education Department (No.~Y202456224). The second author
is supported by Natural Science Foundation of Zhejiang Province (No.~LMS26A010006). The third author is supported by Taishan Scholars Program for Young Experts of Shandong Province (No.~tsqn202507265).}

\author[K. Tang]{Kai Tang}
%
\address{%
School of Mathematical Sciences, Zhejiang Normal University\\ Jinhua, Zhejiang 321004, China
}
\email{kaitang001@zjnu.edu.cn}

\author[Q. Wu]{Qingyan Wu}
\address{%
Department of Mathematics\\ Linyi University\\ Linyi, Shandong 276005, China
}
\email{qingyanwu@gmail.com}

\subjclass{Primary 53D18; Secondary 53C26, 53C28, 81T30}

\keywords{rank-3 generalized Clifford manifold, generalized complex structure, generalized hypercomplex structure, twistor space, generalized Nijenhuis tensor, Clifford rotation, T-duality}

\begin{abstract}
We introduce the notion of a rank-3 generalized Clifford manifold, defined by a triple of generalized complex structures satisfying Clifford-type relations. We show that every such structure canonically induces a generalized hypercomplex structure. We further describe a natural Spin(3)-action by Clifford rotations, which produces an $S^2 \times S^2$-family of generalized complex structures. The corresponding twistor space is then constructed, and we prove that the induced almost generalized complex structure is integrable. In contrast to the standard pure-spinor approach, the integrability of the twistor-space structure is established entirely in terms of the generalized Nijenhuis tensor. We further prove that the Clifford data underlying this twistor construction are compatible with T-duality, in the sense that T-duality preserves the rank-3 Clifford triple, the induced structures, and the associated Spin(3)-rotated family.
\end{abstract}

\thanks{The authors would like to express their sincere gratitude to Professor Hanfei for his detailed and enlightening introduction to generalized complex geometry and T-duality. They are also deeply grateful to Professor Wei Wang for his illuminating exposition of classical twistor theory and Clifford structures. Their generous guidance and valuable insights have played an important role in the development of this work.}
 \maketitle
\section{Introduction}
The development of generalized geometric structures has profoundly influenced modern differential geometry, particularly through its deep connections with string theory and supersymmetric sigma models. The foundational concept of generalized complex geometry was introduced by Nigel Hitchin \cite{MR2013140} and later systematically developed by Marco Gualtieri \cite{MR2811595}. By replacing the tangent bundle $TM$ with the generalized tangent bundle $TM \oplus T^*M$, equipped with the Courant bracket and a natural neutral pairing, this theory provides a unified framework encompassing both complex and symplectic geometry.
Within this framework, generalized K\"{a}hler geometry is defined by a pair of commuting generalized complex structures that are compatible with a common generalized metric (see \cite{MR2811595,MR201414}). This structure extends classical K\"{a}hler geometry and naturally appears in two-dimensional supersymmetric sigma models with torsion.

A further step in this direction is the extension toward hyperk\"{a}hler-type structures. In the classical setting, hyperk\"{a}hler geometry is governed by three complex structures satisfying quaternionic relations. Motivated by this, Andreas Bredthauer \cite{Bredthauer2007} introduced generalized hyperk\"{a}hler geometry by considering triples of generalized complex structures satisfying quaternionic-type algebraic relations. This framework allows one to describe supersymmetric backgrounds beyond the classical hyperk\"{a}hler setting \cite{EzhuthachanGhoshal2007}. Later, Kimura, Sasaki, and Shiozawa \cite{Kimura2022} proposed a broader formulation of generalized hyperk\"{a}hler structures based on the algebra of split-biquaternions. Their approach unifies aspects of hyperk\"{a}hler and bi-hypercomplex geometries and naturally appears in $\mathcal{N}=(4,4)$ supersymmetric nonlinear sigma models. More recently, Fino and Grantcharov \cite{FinoGrantcharov2025} studied generalized hypercomplex structures as $S^2$-families of anti-commuting generalized complex structures.
Moreover, further discussions on generalized twistor spaces of hypercomplex structures can be found in \cite{Deschamps} and \cite{GloverSawon2015}.

From a structural viewpoint, hypercomplex geometry on Courant algebroids was systematically formulated by Sti\'{e}non \cite{hyc}. In that work, hypercomplex structures are defined in terms of triples of orthogonal endomorphisms satisfying quaternionic relations, and it is proved that the vanishing of the associated Nijenhuis concomitants is equivalent to the existence of an (almost) torsion-free hypercomplex connection preserving the three complex structures. This result establishes a precise correspondence between integrability conditions and connection-theoretic characterizations. In \cite{HongStienon2015}, Wei Hong and Mathieu Sti\'{e}non proved that holomorphic symplectic structures and hypercomplex structures on Courant algebroids are equivalent notions.

More recently, hypercomplex geometry has been further broadened to include configurations in which the constituent complex structures satisfy Clifford-type rather than strictly quaternionic algebraic relations
\cite{MRclihma}. Such structures are closely related to bi-hypercomplex and bi-HKT (hyperk\"{a}hler with torsion) geometries and naturally arise in models involving both ordinary and mirror multiplets \cite{bihkt}. The associated Clifford algebras often decompose into direct sums of quaternionic components, reflecting a bisector-type geometric structure and connecting to the doubled target-space formalism in string theory.

In this paper, we introduce the notion of a {\it rank-$r$ almost generalized Clifford structure}, which consists of $r$ almost generalized complex structures
\[
\mathcal{I}_1,\mathcal{I}_2,\dots,\mathcal{I}_r
\]
satisfying the Clifford-type relations
\[
\mathcal{I}_i\mathcal{I}_j+\mathcal{I}_j\mathcal{I}_i=-2\delta_{ij}\mathrm{Id},
\qquad i,j=1,\dots,r.
\]
When each $\mathcal{I}_i$ is integrable, we call such a structure a {\it rank-$r$ generalized Clifford structure}.

In what follows, we focus exclusively on the rank-3 case. Unless explicitly stated otherwise, the term generalized Clifford manifold will always mean a rank-3 generalized Clifford manifold.
We will use the techniques in \cite{bihkt}, \cite{Kimura2022} and \cite{Bredthauer2007} to study the generalized Clifford structure. By the equivalence of the Clifford algebra $C l_{2,1}(\mathbb{R})$ and bi-quaternions, take
\begin{equation}\label{coincide00000}
\mathcal{J}_i:=\frac{1}{2} \epsilon_{ijk} \mathcal{I}_j \mathcal{I}_k, \quad \mathcal{G}:=-\mathcal{I}_1 \mathcal{J}_1=-\mathcal{I}_2 \mathcal{J}_2=-\mathcal{I}_3 \mathcal{J}_3=-\mathcal{I}_1 \mathcal{I}_2 \mathcal{I}_3 .
\end{equation}
We have
\begin{equation}\label{coincide}
\begin{array}{ll}
\mathcal{I}_i \mathcal{I}_j=-\delta_{i j}{\rm Id} +\epsilon_{i j k} \mathcal{J}_k, & \mathcal{J}_i \mathcal{J}_j=-\delta_{i j} {\rm Id}+\epsilon_{i j k} \mathcal{J}_k, \\
\mathcal{I}_i \mathcal{J}_j=-\delta_{i j} \mathcal{G}+\epsilon_{i j k} \mathcal{I}_k, & \mathcal{J}_i \mathcal{I}_j=-\delta_{i j} \mathcal{G}+\epsilon_{i j k} \mathcal{I}_k
\end{array}
\end{equation}
and
$$\mathcal{G}^{2}=1.$$

Our first main result shows that, in the rank-$3$ case, the individual integrability of the generalized Clifford generators implies the integrability of the induced generalized hypercomplex triple and controls the mixed generalized Nijenhuis concomitants: the off-diagonal mixed terms vanish, while the three diagonal terms coincide.
\begin{thm}\label{pullup001}
Let $\left(\mathcal{I}_1, \mathcal{I}_2, \mathcal{I}_3\right)$ be an almost generalized Clifford structure on $\mathbb{T} M:=T M \oplus T^* M$. Then
 \begin{equation}\label{ogcl1110106}
 \mathcal{N}\left(\mathcal{I}_i,\mathcal{I}_i\right)=0,\qquad i=1,2,3,
 \end{equation}
is equivalent to the following conditions:
\begin{equation}\label{ogcl11100}
\begin{aligned}
&\mathcal{N}\left(\mathcal{I}_i,\mathcal{I}_j\right)
=\mathcal{N}\left(\mathcal{J}_i,\mathcal{J}_j\right)=0,
\qquad i,j=1,2,3,\\
&\mathcal{N}\left(\mathcal{I}_i,\mathcal{J}_j\right)=0,
\qquad i\neq j,\\
&\mathcal{N}\left(\mathcal{I}_1,\mathcal{J}_1\right)
=\mathcal{N}\left(\mathcal{I}_2,\mathcal{J}_2\right)
=\mathcal{N}\left(\mathcal{I}_3,\mathcal{J}_3\right).
\end{aligned}
\end{equation}
Here $\mathcal{J}_i$ is the induced almost generalized complex structure defined by \eqref{coincide00000}.
\end{thm}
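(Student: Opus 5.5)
The converse direction is immediate, because \eqref{ogcl11100} with $i=j$ contains \eqref{ogcl1110106}. So all the work is in the forward direction. I will work with the Nijenhuis concomitant in the form
\[
\mathcal N(A,B)(X,Y)=[AX,BY]+[BX,AY]-A([X,BY]+[BX,Y])-B([AX,Y]+[X,AY])+(AB+BA)[X,Y],
\]
where $[\cdot,\cdot]$ is the Courant/Dorfman bracket. This expression is symmetric and bilinear in $(A,B)$, and it satisfies $\mathcal N(A+B,A+B)=\mathcal N(A,A)+\mathcal N(B,B)+2\mathcal N(A,B)$.

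The first step is an anticommuting lemma. Let $A,B$ be almost generalized complex structures with $AB=-BA$. Then $C=(A+B)/\sqrt2$ satisfies $C^2=-\mathrm{Id}$, and the same holds for $AB$. I would prove two Obata-type identities by direct expansion, using $A^2=B^2=-1$ and anticommutativity to move operators through the brackets:
\begin{itemize}
\item $\mathcal N(A,B)$ is a linear combination of $\mathcal N(A,A)$ and $\mathcal N(B,B)$, evaluated at arguments twisted by $A$ and $B$ and post-composed with $A$ and $B$;
\item $\mathcal N(AB,AB)$ is a combination of $\mathcal N(A,A)$, $\mathcal N(B,B)$ and $\mathcal N(A,B)$ of the same kind.
\end{itemize}
Alternatively, I would argue via eigenbundles: if $L_A$ and $L_B$ are involutive, then the $+i$-eigenbundle of $AB$ is built from the intersections $L_A\cap L_B$ and $L_A\cap\bar L_B$, whose brackets close. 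The upshot is that integrable anticommuting $A,B$ give $\mathcal N(A,B)=0$ and $\mathcal N(AB,AB)=0$.

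Applying the lemma to the pairs $(\mathcal I_i,\mathcal I_j)$ with $i\neq j$ gives $\mathcal N(\mathcal I_i,\mathcal I_j)=0$. Since $\mathcal J_k=\pm\mathcal I_i\mathcal I_j$ and sign changes do not affect vanishing, it also gives $\mathcal N(\mathcal J_k,\mathcal J_k)=0$. The $\mathcal J$'s themselves satisfy quaternionic relations by \eqref{coincide}, so they pairwise anticommute and the lemma again yields $\mathcal N(\mathcal J_i,\mathcal J_j)=0$.

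For $i\neq j$, \eqref{coincide} gives $\mathcal I_i\mathcal J_j=\epsilon_{ijk}\mathcal I_k$ and $\mathcal J_j\mathcal I_i=\epsilon_{jik}\mathcal I_k=-\mathcal I_i\mathcal J_j$. Thus $\mathcal I_i$ and $\mathcal J_j$ anticommute, both are now known to be integrable, and the lemma gives $\mathcal N(\mathcal I_i,\mathcal J_j)=0$.

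The diagonal terms are the main obstacle, since $\mathcal I_i$ and $\mathcal J_i$ commute, with $\mathcal I_i\mathcal J_i=\mathcal J_i\mathcal I_i=-\mathcal G$. I would use that $\mathcal G$ is central in the algebra generated by the $\mathcal I$'s, that $\mathcal G^2=1$, and that $\mathcal J_i=\mathcal I_i\mathcal G$. Substituting into the concomitant formula gives
\[
\mathcal N(\mathcal I_i,\mathcal J_i)=\mathcal N(\mathcal I_i,\mathcal I_i\mathcal G).
\]
I would then expand this using $\mathcal N(\mathcal I_i,\mathcal I_i)=0$ and $\mathcal N(\mathcal J_i,\mathcal J_i)=0$, the latter written out with $\mathcal J_i=\mathcal I_i\mathcal G$. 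The goal is an expression depending only on $\mathcal G$, expected to be $-\tfrac12\mathcal N(\mathcal G,\mathcal G)$ up to a universal sign or factor, or possibly $\mathcal G$ applied to a bracket defect of $\mathcal G$; either way it would be manifestly independent of $i$.

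If that direct route stalls, a cleaner alternative is polarization. Write $\mathcal I_i-\mathcal J_i=\mathcal I_i(1-\mathcal G)$ and $\mathcal I_i+\mathcal J_i=\mathcal I_i(1+\mathcal G)$. These are $2\mathcal I_i$ restricted to the $\pm1$ eigenbundles $E_\pm$ of $\mathcal G$ and zero on the complementary eigenbundle. Expanding $\mathcal N(\mathcal I_i\pm\mathcal J_i,\mathcal I_i\pm\mathcal J_i)$ with the quadratic identity then expresses $\mathcal N(\mathcal I_i,\mathcal J_i)$ through these projections. Using that $\mathcal I_i$ preserves $E_\pm$, the remaining terms should reduce to brackets of sections of $E_\pm$ projected back by $1\pm\mathcal G$, which involve only $\mathcal G$. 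That would establish the equality of the three diagonal terms.
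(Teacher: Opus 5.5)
Up to the diagonal terms your argument is the paper's. Your anticommuting lemma is Lemma~\ref{nea}/Lemma~\ref{nea000}, applied to the same pairs $(\mathcal I_i,\mathcal I_j)$, $(\mathcal J_i,\mathcal J_j)$ and $(\mathcal I_i,\mathcal J_j)$ with $i\neq j$. Drop the eigenbundle ``alternative'', though. If $AB=-BA$, then $B$ maps $L_A$ onto $\overline{L_A}$, so $L_A\cap L_B=L_A\cap\overline{L_B}=0$. That splitting belongs to the commuting (generalized K\"ahler) situation and gives nothing here.

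For the diagonal terms you take a genuinely different route. The paper never computes $\Theta_i=\mathcal N(\mathcal I_i,\mathcal J_i)$. Instead it notes that for unit vectors $a\perp b$ the structures $\mathcal I(a)=\sum a_i\mathcal I_i$ and $\mathcal J(b)=\sum b_i\mathcal J_i$ are integrable and anticommute, since $\mathcal I(a)\mathcal J(b)+\mathcal J(b)\mathcal I(a)=-2(a\cdot b)\mathcal G$. The same lemma, together with bilinearity and the vanishing off-diagonal terms, then gives $\sum_i a_ib_i\Theta_i=0$, and suitable choices of $a,b$ give $\Theta_1=\Theta_2=\Theta_3$.

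Your plan of expressing $\Theta_i$ through $\mathcal G$ alone also works, but as written it is only a plan. The reason given in the fallback is not enough: the $\mathcal I_i$-dependent terms do not drop out merely because $\mathcal I_i$ preserves $E_\pm$. For $X\in\Gamma(E_+)$ and $Y\in\Gamma(E_-)$, the polarized term $\mathcal N(\mathcal I_i\mathcal G^+,\mathcal I_i\mathcal G^+)(X,Y)$ equals $-\mathcal I_i\mathcal G^+[\mathcal I_iX,Y]_D-\mathcal G^+[X,Y]_D$. This vanishes only because it equals $\tfrac12\mathcal G^+(N_{\mathcal I_i}+N_{\mathcal J_i})(X,Y)$, so integrability must be used a second time.

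The clean device is to split the arguments, not just the operators. On sections of $E_\pm$ one has $\mathcal J_i=\pm\mathcal I_i$, so $\mathcal G$ never has to pass through a bracket. A direct expansion, in the paper's normalization with $\mathcal G^\pm=\frac12(\mathrm{Id}\pm\mathcal G)$, gives
\[
\mathcal N(\mathcal I_i,\mathcal J_i)(X,Y)=
\begin{cases}
\tfrac12(N_{\mathcal I_i}+N_{\mathcal J_i})(X,Y)+2\,\mathcal G^-[X,Y]_D, & X,Y\in\Gamma(E_+),\\
-\tfrac12(N_{\mathcal I_i}+N_{\mathcal J_i})(X,Y)-2\,\mathcal G^+[X,Y]_D, & X,Y\in\Gamma(E_-),\\
\tfrac12\,\mathcal G\,(N_{\mathcal I_i}+N_{\mathcal J_i})(X,Y), & \text{mixed components}.
\end{cases}
\]
On these three components $N_{\mathcal G}$ from \eqref{nittsrr} equals $4\mathcal G^-[X,Y]_D$, $4\mathcal G^+[X,Y]_D$ and $0$ respectively. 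Integrability of $\mathcal I_i$ and $\mathcal J_i$ therefore gives $\Theta_i=-\tfrac12\,\mathcal G\,N_{\mathcal G}$ for every $i$. This is $-\tfrac12\,\mathcal G\,\mathcal N(\mathcal G,\mathcal G)$ in either normalization, so your guess of ``$\mathcal G$ applied to the bracket defect of $\mathcal G$'' is exactly right.

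The paper's argument is shorter and reuses the anticommuting lemma with no new bracket computation. Yours costs a componentwise expansion, but it identifies the common value explicitly. It shows directly why that value cannot depend on $i$. It also links the theorem to the generalized real structures of Section~2.2: the diagonal concomitants vanish iff $N_{\mathcal G}=0$.
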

We further show that rank-$3$ generalized Clifford structures admit natural $Spin(3)$-transformations induced by orthogonal rotations. These transformations act as rank-$3$ Clifford rotations and generate an
 $S^2\times S^2$-family of generalized complex structures, extending the classical twistor sphere construction. Via stereographic projection, each Riemann sphere is identified with $S^2 \subset \mathbb{R}^3$. The associated rotation matrices $T\left(\zeta_1\right)$ and $S\left(\zeta_2\right) \in S O(3)$ encode the Clifford rotations determined by the parameters $\zeta_1$ and $\zeta_2$, which are the stereographic complex coordinates on the two factors of $S^2 \times S^2$. The explicit formulas for these matrices are provided in Section 4. So we establish a twistor transformation for generalized Clifford manifolds, showing that Clifford rotations generate a natural $S^2\times S^2$-family of generalized complex structures over $TM \oplus T^*M$.

\begin{thm}\label{6666661}
The transform
$$\left(\begin{array}{c}\mathcal{I}_1\\ \mathcal{I}_2\\ \mathcal{I}_3\end{array}\right)
\rightarrow\left(\begin{array}{c}\mathcal{K}_1(\zeta_1,\zeta_2)\\ \mathcal{K}_2(\zeta_1,\zeta_2)\\\mathcal{K}_3(\zeta_1,\zeta_2)\end{array}\right)
=\frac{1}{2}T(\zeta_1)\left(\begin{array}{c}\mathcal{I}_{1}+\mathcal{I}_{2}\mathcal{I}_{3} \\ \mathcal{I}_{2}+\mathcal{I}_{3}\mathcal{I}_{1}\\ \mathcal{I}_{3}+\mathcal{I}_{1}\mathcal{I}_{2}\end{array}\right)
+\frac{1}{2}S(\zeta_2)\left(\begin{array}{c}-\mathcal{I}_{1}+\mathcal{I}_{2}\mathcal{I}_{3} \\ -\mathcal{I}_{2}+\mathcal{I}_{3}\mathcal{I}_{1}\\- \mathcal{I}_{3}+\mathcal{I}_{1}\mathcal{I}_{2}\end{array}\right)$$
defines a rank-3 Clifford rotation. Moreover, the triple
$$\left(\mathcal{K}_1(\zeta_1,\zeta_2), \mathcal{K}_2(\zeta_1,\zeta_2), \mathcal{K}_3(\zeta_1,\zeta_2)\right)$$
 forms a new rank-$3$ generalized Clifford structure.
\end{thm}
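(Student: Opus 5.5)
The plan is to split the Clifford algebra generated by $\mathcal{I}_1,\mathcal{I}_2,\mathcal{I}_3$ into its two commuting quaternionic blocks. Set
\[
P_\pm:=\tfrac12(\mathrm{Id}\mp\mathcal{G}),\qquad A_i:=\tfrac12(\mathcal{I}_i+\mathcal{J}_i),\qquad B_i:=\tfrac12(-\mathcal{I}_i+\mathcal{J}_i),
\]
where $\mathcal{J}_i=\mathcal{I}_j\mathcal{I}_k$ for $(i,j,k)$ cyclic. These are exactly the two column vectors appearing in the statement. Using \eqref{coincide} and $\mathcal{G}^2=1$, I will first check the following facts.
\begin{itemize}
\item $\mathcal{G}$ commutes with every $\mathcal{I}_i$ and every $\mathcal{J}_i$; this holds because $\mathcal{G}=-\mathcal{I}_1\mathcal{I}_2\mathcal{I}_3$ is central in $Cl_{2,1}$-type relations.
\item $\mathcal{I}_i\mathcal{G}=\mathcal{J}_i$, so $\mathcal{I}_i=-\mathcal{J}_i\mathcal{G}$.
\item The $A_i$ and $B_i$ satisfy $A_i=\mathcal{J}_iP_+$ and $B_i=\mathcal{J}_iP_-$, up to the sign convention for $\mathcal{G}$, which I will fix by direct computation.
\item $A_iB_j=B_jA_i=0$.
\item $A_iA_j=-\delta_{ij}P_++\epsilon_{ijk}A_k$ and $B_iB_j=-\delta_{ij}P_-+\epsilon_{ijk}B_k$.
\end{itemize}
Thus $(A_i)$ and $(B_i)$ are quaternion units on the complementary subbundles $\operatorname{Im}P_\pm$. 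Each $\mathcal{I}_i$ and $\mathcal{J}_i$ is $\pm$-orthogonal, and $\mathcal{G}$ is symmetric with respect to the pairing. Hence $P_\pm$ are orthogonal projections, and each $A_i$, $B_i$ is skew on its block.

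For a rotation $R\in SO(3)$, the map $A_i\mapsto\sum_j R_{ij}A_j$ preserves the quaternion relations. This is because $\sum_j R_{ij}R_{lj}=\delta_{il}$ and $\epsilon$ is $SO(3)$-invariant, i.e.\ $\sum R_{ia}R_{jb}\epsilon_{abc}=\epsilon_{ijk}R_{kc}$. Apply this with $R=T(\zeta_1)$ to the $A$'s and $R=S(\zeta_2)$ to the $B$'s, and write $A'_i=(TA)_i$, $B'_i=(SB)_i$. Then $\mathcal{K}_i=A'_i+B'_i$. Since the blocks annihilate each other, I get
\[
\mathcal{K}_i\mathcal{K}_j=A'_iA'_j+B'_iB'_j=-\delta_{ij}\mathrm{Id}+\epsilon_{ijk}(A'_k+B'_k),
\]
and symmetrizing gives the Clifford relation $\mathcal{K}_i\mathcal{K}_j+\mathcal{K}_j\mathcal{K}_i=-2\delta_{ij}\mathrm{Id}$. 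The $A'_i,B'_i$ are again skew, so each $\mathcal{K}_i$ is orthogonal with $\mathcal{K}_i^2=-1$, i.e.\ an almost generalized complex structure. Moreover $(T,S)\in SO(3)\times SO(3)$ acts on the two quaternion blocks, which is the "rank-3 Clifford rotation" (lifting to $Spin(3)$ via unit quaternions if desired). The parameters $\zeta_1,\zeta_2$ enter only through constant matrices, so pointwise this is all linear algebra. For later use I note that $\mathcal{K}_i\mathcal{K}_j$ for $i\ne j$ equals $\epsilon_{ijk}(A'_k+B'_k)$ and the induced $\mathcal{G}'$ equals $\mathcal{G}$.

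The remaining issue is integrability, $\mathcal{N}(\mathcal{K}_i,\mathcal{K}_i)=0$. Here I will use Theorem \ref{pullup001}. Under the hypothesis $\mathcal{N}(\mathcal{I}_i,\mathcal{I}_i)=0$, all concomitants $\mathcal{N}(\mathcal{I}_i,\mathcal{I}_j)$ and $\mathcal{N}(\mathcal{J}_i,\mathcal{J}_j)$ vanish, the mixed ones $\mathcal{N}(\mathcal{I}_i,\mathcal{J}_j)$ vanish for $i\ne j$, and the diagonal mixed ones equal a common tensor $\mathcal{N}_0$.

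Now expand $\mathcal{K}_i=\frac12\sum_j\bigl[(T_{ij}-S_{ij})\mathcal{I}_j+(T_{ij}+S_{ij})\mathcal{J}_j\bigr]$ and use bilinearity and symmetry of $\mathcal{N}(\cdot,\cdot)$, on the assumption that the paper's concomitant is symmetric bilinear. The only possibly nonzero contributions are then the $\mathcal{N}(\mathcal{I}_j,\mathcal{J}_j)$ terms. They give
\[
\mathcal{N}(\mathcal{K}_i,\mathcal{K}_i)=\tfrac12\sum_j(T_{ij}^2-S_{ij}^2)\,\mathcal{N}_0=\tfrac12(1-1)\mathcal{N}_0=0,
\]
since the rows of $T$ and $S$ are unit vectors. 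This row-norm cancellation is exactly where the equality of the three diagonal concomitants in Theorem \ref{pullup001} is essential.

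The main obstacle I expect is not this cancellation but making sure the concomitant $\mathcal{N}(\cdot,\cdot)$ used in Theorem \ref{pullup001} is genuinely symmetric and $\mathbb{R}$-bilinear with constant coefficients. I would verify this from its defining formula. I would also double-check the sign conventions relating $A_i,B_i$ to $P_\pm$ so that the block orthogonality $A_iB_j=0$ really holds.
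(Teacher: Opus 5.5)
Your proposal is correct and is essentially the paper's proof: your $A_i,B_i$ are the paper's $\mathcal{I}^{+}_i,\mathcal{I}^{-}_i$, the two quaternionic blocks are rotated by $T(\zeta_1)$ and $S(\zeta_2)$ and added, and integrability comes from writing $\mathcal{K}_i=\sum_l\bigl(\tfrac12(t_{il}-s_{il})\mathcal{I}_l+\tfrac12(t_{il}+s_{il})\mathcal{J}_l\bigr)$ and using Theorem~\ref{pullup001}, so that only $\tfrac12\sum_l(t_{il}^2-s_{il}^2)\Theta=0$ survives (the paper checks all $\mathcal{N}(\mathcal{K}_i,\mathcal{K}_j)$, while you check only the diagonal ones the definition requires). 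Two side remarks need fixing, though neither affects the argument: since $\mathcal{G}^2=\mathrm{Id}$ one has $\mathcal{I}_i=\mathcal{J}_i\mathcal{G}$ (no minus sign), so $A_i=\mathcal{J}_i\cdot\tfrac12(\mathrm{Id}+\mathcal{G})$ lives on the $+1$ eigenbundle, i.e.\ your $P_\pm$ labels are swapped; and your own formula gives $\mathcal{K}_1\mathcal{K}_2=\mathcal{K}_3$, so the induced involution is $-\mathcal{K}_1\mathcal{K}_2\mathcal{K}_3=-\mathcal{K}_3^2=\mathrm{Id}$, not $\mathcal{G}$ (for $T,S\in SO(3)$ the rotated triple is always quaternionic), so do not carry $\mathcal{G}'=\mathcal{G}$ forward.
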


Furthermore, by Theorem \ref{6666661}, and in close analogy with the generalized hyperk\"{a}hler case \cite{Bredthauer2007}, we construct the twistor space of a rank-$3$ generalized Clifford manifold. We then prove, in terms of the generalized Nijenhuis tensor, that the induced generalized complex structure on the twistor space is integrable.
\begin{thm}\label{lbnl1}
The twistor space of a rank-$3$ generalized Clifford manifold $M$ is the smooth manifold $Z=M \times S^2\times S^2$ endowed with the generalized complex structure $\mathbb{\hat{I}}$ defined at $T_{(p, \zeta_1,\zeta_2)} Z \oplus T_{(p, \zeta_1,\zeta_2)}^* Z$ as
$$
\left.\mathbb{\hat{I}}\right|_{(p, \zeta_1,\zeta_2)}=\left.\left.\hat{\mathcal{I}}(\zeta_1,\zeta_2)\right|_{T_p M \oplus T_p^* M} \oplus \mathcal{J}\right|_{T\left(S^2 \times S^2\right) \oplus T^*\left(S^2 \times S^2\right)},
$$
where $\hat{\mathcal{I}}(\zeta_1,\zeta_2)=\mathcal{K}_1(\zeta_1,\zeta_2)$ and
\[
\mathcal{J}:=
\left(\begin{array}{cccc}
J_{\zeta_1} & 0& 0& 0 \\
0&J_{\zeta_2} &0 & 0 \\
0& 0&-J^{T}_{\zeta_1} & 0 \\
0& 0&0 & -J^{T}_{\zeta_2}
\end{array}\right)
\]
is the generalized complex structure on $S^2\times S^2$ induced by the canonical complex structure via $S^2 \cong \mathbb{C} P^1$.
\end{thm}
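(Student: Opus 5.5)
The claim to prove is that $\hat{\mathbb{I}}$ is a generalized complex structure on $\mathbb{T}Z$. This means two things: it is orthogonal with square $-\mathrm{Id}$, and its generalized Nijenhuis tensor $\mathcal{N}(\hat{\mathbb{I}},\hat{\mathbb{I}})$ vanishes for the Courant bracket on $Z=M\times S^2\times S^2$. The plan works entirely with the Nijenhuis tensor and relies on Theorem \ref{pullup001} and Theorem \ref{6666661}.

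\textbf{Algebraic part.} This is immediate from what is already proved.
\begin{itemize}
\item By Theorem \ref{6666661}, $\mathcal{K}_1(\zeta_1,\zeta_2)$ is an almost generalized complex structure for every $(\zeta_1,\zeta_2)$.
\item $\mathcal{J}$ is the block-diagonal generalized complex structure induced by the complex structure of $\mathbb{C}P^1\times\mathbb{C}P^1$, so it is orthogonal with $\mathcal{J}^2=-\mathrm{Id}$.
\item The direct sum of two such structures has the same properties.
\end{itemize}

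\textbf{Expanding $\mathcal{K}_1$.} Write $\mathcal{K}_1(\zeta_1,\zeta_2)=\sum_i a_i(\zeta_1)\mathcal{P}_i+\sum_i b_i(\zeta_2)\mathcal{Q}_i$, where $\mathcal{P}_i=\tfrac12(\mathcal{I}_i+\mathcal{J}_i)$ and $\mathcal{Q}_i=\tfrac12(-\mathcal{I}_i+\mathcal{J}_i)$. Here $a(\zeta_1)$ and $b(\zeta_2)$ are the first rows of $T(\zeta_1)$ and $S(\zeta_2)$, i.e.\ unit vectors in $\mathbb{R}^3$. In stereographic coordinates,
\[
a=\Bigl(\tfrac{1-|\zeta_1|^2}{1+|\zeta_1|^2},\tfrac{\zeta_1+\bar\zeta_1}{1+|\zeta_1|^2},\tfrac{i(\bar\zeta_1-\zeta_1)}{1+|\zeta_1|^2}\Bigr),
\]
and $b$ is given by the analogous formula in $\zeta_2$, possibly up to orientation conventions.

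\textbf{Splitting the Nijenhuis tensor.} Take sections $X+\xi$ of $\mathbb{T}M$ depending on $(\zeta_1,\zeta_2)$, and sections of $\mathbb{T}(S^2\times S^2)$. Split $\mathcal{N}(\hat{\mathbb{I}},\hat{\mathbb{I}})$ into three types according to how many arguments are horizontal ($M$-directions) and how many are vertical.
\begin{itemize}
\item \textbf{Both arguments vertical.} The term reduces to the Nijenhuis tensor of $\mathcal{J}$, which vanishes because $\mathbb{C}P^1\times\mathbb{C}P^1$ is a complex manifold.
\item \textbf{Both arguments horizontal, at fixed $(\zeta_1,\zeta_2)$.} Expand $\mathcal{N}(\mathcal{K}_1,\mathcal{K}_1)$ bilinearly in the $\mathcal{I}_i,\mathcal{J}_i$. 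Theorem \ref{pullup001} kills $\mathcal{N}(\mathcal{I}_i,\mathcal{I}_j)$, $\mathcal{N}(\mathcal{J}_i,\mathcal{J}_j)$ and the off-diagonal $\mathcal{N}(\mathcal{I}_i,\mathcal{J}_j)$. The only survivors are the diagonal mixed terms, weighted by coefficients proportional to $\sum_i(a_i^2-b_i^2)=1-1=0$. Since the three diagonal terms $\mathcal{N}(\mathcal{I}_i,\mathcal{J}_i)$ coincide by Theorem \ref{pullup001}, they cancel. This gives the integrability of $\mathcal{K}_1$, which is also the content of Theorem \ref{6666661}.
\item \textbf{Mixed terms.} With one vertical argument $V$ (a lift of a vector field on $S^2\times S^2$) and one horizontal argument, the Courant bracket produces derivatives $V(a_i)$ and $V(b_i)$. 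The mixed Nijenhuis components then reduce to the expression $(\partial_V\mathcal{K}_1)(\,\cdot\,)-\mathcal{K}_1(\partial_{\mathcal{J}V}\mathcal{K}_1)(\,\cdot\,)$ and its conjugate, possibly up to sign.
\end{itemize}

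\textbf{Main obstacle: the mixed terms.} The key identity needed is
\[
\partial_{J V}\mathcal{K}_1=\mathcal{K}_1\,\partial_V\mathcal{K}_1,
\]
i.e.\ the map $(\zeta_1,\zeta_2)\mapsto\mathcal{K}_1$ is holomorphic as a map into the "space of complex structures". Equivalently, $\partial_{\bar\zeta_k}$ of the $+i$-eigenbundle of $\mathcal{K}_1$ stays inside that eigenbundle.

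I would check this separately in $\zeta_1$ and in $\zeta_2$.
\begin{itemize}
\item $\mathcal{K}_1=\mathcal{P}(a)+\mathcal{Q}(b)$, and $\mathcal{P}_i\mathcal{Q}_j=0$ (this follows from the relations \eqref{coincide} together with $\mathcal{G}^2=1$, which make $\tfrac12(1\pm\mathcal{G})$ complementary projectors).
\item Hence each factor behaves like the classical hypercomplex twistor family $a\cdot\mathcal{P}$ on the image of the appropriate projector. For that family the identity $\partial_{\bar\zeta}(a\cdot\mathcal{P})=\mathcal{K}_1\,\partial_{\bar\zeta}(a\cdot\mathcal{P})\cdot(-i)$ follows from the quaternionic relations $\mathcal{P}_i\mathcal{P}_j=-\delta_{ij}\tfrac12(1+\mathcal{G})+\epsilon_{ijk}\mathcal{P}_k$, exactly as in \cite{Bredthauer2007}.
\end{itemize}
The remaining danger is sign and orientation conventions in $T,S$, and possible $d\zeta$-form terms from the Courant bracket's $d\langle\cdot,\cdot\rangle$ part. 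These I would handle by verifying that the relevant pairings are constant in $\zeta$, using orthogonality of $\mathcal{K}_1$.
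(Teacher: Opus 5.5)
Your overall architecture is the paper's. You use the same split into vertical--vertical, mixed and horizontal--horizontal terms, and Theorem~\ref{6666661} for the $\mathbb{T}M$-part. You also use the same key identity $\hat{\mathcal I}\,(d_Y\hat{\mathcal I})=d_{J_SY}\hat{\mathcal I}$ (your $\partial_{JV}\mathcal K_1=\mathcal K_1\partial_V\mathcal K_1$), obtained sector by sector from the quaternionic relations and $J_p(w)=p\times w$.

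\textbf{The gap is in the horizontal--horizontal case.} Working ``at fixed $(\zeta_1,\zeta_2)$'' only controls the $\mathbb{T}M$-component of $N_{\hat{\mathbb I}}(A,B)$. On $Z$, the Dorfman bracket of two horizontal sections $a=U+\mu$, $b=V+\nu$ also has a $T^*S$-component,
$\bigl([a,b]_D\bigr)_{T^*S}(W)=\nu(d_WU)+(d_W\mu)(V)=2\langle d_Wa,b\rangle$.
This arises from $\mathcal L_U\nu-\iota_Vd\mu$ differentiating the $\zeta$-dependent coefficients, not from the symmetric $d\langle a,b\rangle$ part. For $\zeta$-independent $A,B$ it gives
\[
\bigl(N_{\hat{\mathbb I}}(A,B)\bigr)_{T^*S}(W)=2\langle (d_W\hat{\mathcal I})A,\hat{\mathcal I}B\rangle+2\langle (d_{J_SW}\hat{\mathcal I})A,B\rangle .
\]
Orthogonality of $\hat{\mathcal I}$ makes $\hat{\mathcal I}$ and $d_W\hat{\mathcal I}$ skew, but that only rewrites this as $2\langle\bigl((d_W\hat{\mathcal I})\hat{\mathcal I}+d_{J_SW}\hat{\mathcal I}\bigr)A,B\rangle$. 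This need not vanish for an arbitrary orthogonal family: for the same family with $J_S$ replaced by $-J_S$ it equals $-4\langle (d_{J_SW}\hat{\mathcal I})A,B\rangle$, which is in general nonzero. What kills it is the holomorphicity identity $(d_W\hat{\mathcal I})\hat{\mathcal I}=-d_{J_SW}\hat{\mathcal I}$, as in the paper. So you must invoke your key identity a second time here; ``verifying that the relevant pairings are constant in $\zeta$'' does not suffice. The $d\langle\cdot,\cdot\rangle$-terms you were worried about do cancel by orthogonality, since Courant and Dorfman Nijenhuis tensors agree for an orthogonal $\hat{\mathbb I}$, but that is not where this vertical component comes from.

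\textbf{The sign in the mixed case matters.} Your reduced expression $(\partial_V\mathcal K_1)-\mathcal K_1(\partial_{JV}\mathcal K_1)$, combined with your own key identity, equals $2\partial_V\mathcal K_1\neq0$. The correct reduction, for $A$ independent of the $S$-variables, is
$N_{\hat{\mathbb I}}(V,A)=(d_{J_SV}\hat{\mathcal I})A-\hat{\mathcal I}(d_V\hat{\mathcal I})A$,
with $N_{\hat{\mathbb I}}(A,V)$ its negative. This vanishes exactly by $\hat{\mathcal I}\,d_V\hat{\mathcal I}=d_{J_SV}\hat{\mathcal I}$. Whether the family is holomorphic or antiholomorphic is decided by the orientation check $d\vec c(J_SY)=\vec c\times d\vec c(Y)$ for the actual first row of $T(\zeta_1)$. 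That row is $(1-|\zeta_1|^2,\,2\,\mathrm{Im}\,\zeta_1,\,-2\,\mathrm{Re}\,\zeta_1)/(1+|\zeta_1|^2)$, not the vector you wrote, although yours happens to have the same orientation. So this sign has to be pinned down, not left ``up to sign''.

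\textbf{A minor omission.} You should also record the easy case of a vertical covector $\eta$ paired with a horizontal $A$. It vanishes because $d\eta$ has only $S$-components while the anchors of $A$ and $\hat{\mathcal I}A$ are tangent to $M$.
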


The present construction is motivated by both differential geometry
and mathematical physics. The \(S^2\times S^2\)-family provides a
twistor-type parameter space for the two quaternionic sectors of a
rank-\(3\) generalized Clifford structure and offers a natural
framework for studying integrability and deformations of generalized
complex structures. Since the construction is formulated in terms of
the Dorfman bracket and the generalized Nijenhuis tensor, it naturally
accommodates \(B\)-field transformations and twisted brackets. Moreover,
the underlying Clifford data and the associated \(S^2\times S^2\)-family
behave naturally under \(T\)-duality. The positive and negative sectors are
also analogous to the two hypercomplex sectors appearing in generalized
hyperk\"ahler, bi-hypercomplex, and bi-HKT geometry, which are related
to supersymmetric sigma models with ordinary and mirror multiplets.

We also briefly comment on possible extensions to higher rank.
Although generalized Clifford structures are defined for arbitrary
rank, the present twistor construction relies on a feature specific
to rank \(3\): the involution
$\mathcal G=-\mathcal I_1\mathcal I_2\mathcal I_3$
produces two quaternionic sectors and hence the parameter space
\(S^2\times S^2\). In higher rank, the Clifford-module decomposition
depends essentially on the rank and does not in general admit the same
two-sector description. A higher-rank generalized twistor construction
would therefore require a separate analysis of the appropriate
parameter space and its integrability conditions. We leave this problem
for future work.

In Sect.~2, we investigate several structural properties of generalized complex structures in terms of the generalized Nijenhuis tensor. These results play a crucial role in proving the integrability of the generalized complex structure on the twistor space. In Sect.~3, we introduce generalized Clifford structures and present several basic examples. In Sect.~4, we construct the Spin(3)-transformations associated with rank-3 generalized Clifford structures and establish the generalized complex structure on the twistor space. In Sect.~5, we further study the behavior of rank-3 twisted generalized Clifford structures under T-duality, showing that T-duality preserves not only the original Clifford triple, but also the induced structures and the associated Spin(3)-rotated family.

\section{Preliminaries}

Let $M$ be a smooth manifold of even dimension $n$. Let $\mathbb{T} M=T M \oplus T^* M$, which is a direct sum of tangent bundle and cotangent bundle of $M$. The inner product $\langle\cdot,\cdot\rangle$ over $\mathbb{T} M$ is defined by
$$
\langle X+\xi, Y+\eta\rangle:=\frac{1}{2}(\xi(Y)+\eta(X)) ,
$$
where $X,Y\in TM$ and $\xi,\eta\in TM^* $. Obviously, the inner product $\langle\cdot,\cdot\rangle$ is  symmetric and has signature $(n, n)$.
The Dorfman bracket $[\cdot, \cdot]_D$ over  $\mathbb{T} M$ is defined by
\begin{equation}\label{dorfman1}
[X+\xi, Y+\eta]_D=[X, Y]+L_X \eta-\iota_Y {\rm d} \xi.
\end{equation}
The Dorfman bracket satisfies the Jacobi identity but is not skew-symmetric. The twisted version of a bracket $[X+\xi, Y+\eta]_D$ over $\mathbb{T} M$ with a $3$-form $H$ is defined by
\begin{equation}\label{dorfman2}
[X+\xi, Y+\eta]_{H}=[X, Y]+L_X \eta-\iota_Y {\rm d} \xi-\iota_Y \iota_X  H .
\end{equation}
The Jacobi identity for the bracket $[X+\xi, Y+\eta]_{H}$ requires that H must be closed.

\subsection{Generalized complex structure}
In this section, we formulate several properties of generalized complex structures purely in terms of the generalized Nijenhuis tensor, which will be used in the proofs of the subsequent theorems.

An almost generalized complex structure on $M$ is an orthogonal endomorphism $\mathcal{J}$ of $\mathbb{T} M$ satisfying $\mathcal{J}^2=-$ Id. If $\mathcal{J}$ is integrable with respect to the Dorfman bracket, then it is called a generalized complex structure.
For two bundle endomorphisms
$$
\mathcal{I}, \mathcal{J}: \mathbb{T} M \rightarrow\mathbb{T} M
$$
their mixed Nijenhuis tensor (also called the Nijenhuis concomitant) is the bilinear map
$$
\mathcal{N}(\mathcal{I}, \mathcal{J}): \Gamma(\mathbb{T} M) \times \Gamma(\mathbb{T} M) \rightarrow \Gamma(\mathbb{T} M)
$$
defined by
\begin{equation}\label{mnvf}
\begin{aligned}
\mathcal{N}(\mathcal{I},\mathcal{J})\left(A, B\right)= & \frac{1}{2}\left({\left[\mathcal{I} A,\mathcal{J} B\right]_D+\left[\mathcal{J} A, \mathcal{I} B\right]_D }
 -\mathcal{I}\left[A, \mathcal{J} B\right]_D-\mathcal{I}\left[\mathcal{J} A,B\right]_D\right. \\
& \left.-\mathcal{J}\left[A, \mathcal{I} B\right]_D-\mathcal{J}\left[\mathcal{I}A, B\right]_D+\mathcal{I}\mathcal{J}\left[A, B\right]_D+\mathcal{J}\mathcal{I}\left[A, B\right]_D\right) ,
\end{aligned}
\end{equation}
where $[\cdot, \cdot]_D$ is the Dorfman bracket and $A,B \in \Gamma(\mathbb{T} M)$. And the Nijenhuis tensor of $\mathcal{J}$ is
{\small \begin{equation}\label{mnvf111}
N_{\mathcal{J}}(A, B)=\mathcal{N}(\mathcal{J},\mathcal{J})\left(A, B\right)=[\mathcal{J} A, \mathcal{J} B]_D-\mathcal{J}[\mathcal{J}A, B]_D-\mathcal{J}[A, \mathcal{J} B]_D-[A, B]_D.
\end{equation}}

\begin{lem}\label{nea}
For generalized almost-complex structure $\mathcal{I}$ and $\mathcal{J}$, if $\mathcal{I}\mathcal{J}=-\mathcal{J}\mathcal{I}$, we have
\begin{itemize}
\item[(1)] The Nijenhuis expression for $\mathcal{I}\mathcal{J}$:
 \begin{equation*}\begin{split}
N_{\mathcal{I}\mathcal{J}}(A, B)= & \frac{1}{2}\left[N_{\mathcal{I}}(\mathcal{J}A, \mathcal{J} B)+N_{\mathcal{J}}(\mathcal{I} A, \mathcal{I} B)+N_{\mathcal{I}}(A, B)+N_{\mathcal{J}}(A, B)\right.\\
-&\left.\mathcal{I}  N_{\mathcal{J}}(\mathcal{I} A, B)-\mathcal{I} N_{\mathcal{J}}(A, \mathcal{I} B)
-\mathcal{J} N_{\mathcal{I}}(\mathcal{J} A, B)-\mathcal{J} N_{\mathcal{I}}(A, \mathcal{J} B)\right].
\end{split}\end{equation*}
\item[(2)] The mixed Nijenhuis expression for $\mathcal{I}$ and $\mathcal{J}$:
 \begin{equation*}\begin{split}
N\left(\mathcal{I},\mathcal{J}\right)(A, B)= & \frac{1}{2}\left[-N_{\mathcal{I}\mathcal{J}}(\mathcal{J}A, \mathcal{I} B)+\mathcal{I}\mathcal{J}\left(N_{\mathcal{I}}(A, B)-N_{\mathcal{J}}(A, B)\right)\right].
\end{split}\end{equation*}
\end{itemize}
\end{lem}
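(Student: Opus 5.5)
Both identities are purely algebraic. They use only the $\mathbb{R}$-bilinearity of the Dorfman bracket together with the relations
\[
\mathcal{I}^2=\mathcal{J}^2=-{\rm Id},\qquad \mathcal{I}\mathcal{J}=-\mathcal{J}\mathcal{I},\qquad (\mathcal{I}\mathcal{J})^2=-\mathcal{I}\mathcal{I}\mathcal{J}\mathcal{J}=-{\rm Id}.
\]
I will therefore prove each formula by expanding both sides in the basic brackets and comparing coefficients. Neither orthogonality nor integrability plays any role, and $N_{\mathcal{I}\mathcal{J}}$ is understood through formula \eqref{mnvf111} with $\mathcal{I}\mathcal{J}$ in place of $\mathcal{J}$. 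The basic brackets are the expressions $P[XA,YB]_D$ with $X,Y\in\{{\rm Id},\mathcal{I},\mathcal{J},\mathcal{I}\mathcal{J}\}$ and outer operator $P\in\{{\rm Id},\mathcal{I},\mathcal{J},\mathcal{I}\mathcal{J}\}$, and every expression will be reduced to this normal form using
\[
\mathcal{I}\mathcal{J}\mathcal{I}=\mathcal{J},\qquad \mathcal{I}\mathcal{J}\mathcal{J}=-\mathcal{I},\qquad \mathcal{J}\mathcal{I}\mathcal{J}=\mathcal{I},\qquad \mathcal{I}\mathcal{J}\cdot\mathcal{I}\mathcal{J}=-{\rm Id}.
\]

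I would do (2) first, since it is shorter. By \eqref{mnvf} the last two terms of $\mathcal{N}(\mathcal{I},\mathcal{J})$ cancel because $\mathcal{I}\mathcal{J}+\mathcal{J}\mathcal{I}=0$, so the left side is one half of
\[
[\mathcal{I}A,\mathcal{J}B]_D+[\mathcal{J}A,\mathcal{I}B]_D-\mathcal{I}[A,\mathcal{J}B]_D-\mathcal{I}[\mathcal{J}A,B]_D-\mathcal{J}[A,\mathcal{I}B]_D-\mathcal{J}[\mathcal{I}A,B]_D .
\]
On the right I expand $N_{\mathcal{I}\mathcal{J}}(\mathcal{J}A,\mathcal{I}B)$ using $\mathcal{I}\mathcal{J}\mathcal{J}A=-\mathcal{I}A$ and $\mathcal{I}\mathcal{J}\mathcal{I}B=\mathcal{J}B$, which gives
\[
-[\mathcal{I}A,\mathcal{J}B]_D+\mathcal{I}\mathcal{J}[\mathcal{I}A,\mathcal{I}B]_D-\mathcal{I}\mathcal{J}[\mathcal{J}A,\mathcal{J}B]_D-[\mathcal{J}A,\mathcal{I}B]_D .
\]
Next I expand $\mathcal{I}\mathcal{J}(N_{\mathcal{I}}-N_{\mathcal{J}})(A,B)$. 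Here the $[A,B]_D$ terms cancel, $\mathcal{I}\mathcal{J}\mathcal{I}=\mathcal{J}$, and $\mathcal{I}\mathcal{J}\mathcal{J}=-\mathcal{I}$. In the resulting sum the $\mathcal{I}\mathcal{J}[\mathcal{I}A,\mathcal{I}B]_D$ and $\mathcal{I}\mathcal{J}[\mathcal{J}A,\mathcal{J}B]_D$ terms cancel pairwise, and exactly the six terms displayed above remain. This check is short and I have already carried it through.

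For (1) I will proceed the same way, expanding the right-hand side. For instance,
\[
N_{\mathcal{I}}(\mathcal{J}A,\mathcal{J}B)=[\mathcal{I}\mathcal{J}A,\mathcal{I}\mathcal{J}B]_D-\mathcal{I}[\mathcal{I}\mathcal{J}A,\mathcal{J}B]_D-\mathcal{I}[\mathcal{J}A,\mathcal{I}\mathcal{J}B]_D-[\mathcal{J}A,\mathcal{J}B]_D .
\]
The terms $\mathcal{I}N_{\mathcal{J}}(\mathcal{I}A,B)$ and its companions are rewritten similarly, moving outer operators through with $\mathcal{I}\mathcal{I}=-{\rm Id}$ and $\mathcal{I}\mathcal{J}=-\mathcal{J}\mathcal{I}$. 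I would then tabulate the coefficient of each basic bracket $P[XA,YB]_D$ over the eight contributions. The expected outcome is that every bracket not occurring in
\[
N_{\mathcal{I}\mathcal{J}}(A,B)=[\mathcal{I}\mathcal{J}A,\mathcal{I}\mathcal{J}B]_D-\mathcal{I}\mathcal{J}[\mathcal{I}\mathcal{J}A,B]_D-\mathcal{I}\mathcal{J}[A,\mathcal{I}\mathcal{J}B]_D-[A,B]_D
\]
appears in canceling pairs. These include $[\mathcal{I}A,\mathcal{I}B]_D$, $[\mathcal{J}A,\mathcal{J}B]_D$, the mixed terms $\mathcal{I}[\mathcal{I}\mathcal{J}A,\mathcal{J}B]_D$ and their variants, and the single-operator terms such as $\mathcal{I}[\mathcal{I}A,B]_D$. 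The four terms of $N_{\mathcal{I}\mathcal{J}}(A,B)$ themselves should each appear with total coefficient $2$, which matches the prefactor $\tfrac12$.

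The main obstacle is bookkeeping in (1). There are $32$ bracket terms, and sign errors from commuting $\mathcal{I}$ past $\mathcal{J}$ are easy to make. To control this I would group the terms by the pair $(X,Y)$ of inner operators. Within each group I would verify that the outer operators combine to $0$, or to $2\cdot$(the corresponding term of $N_{\mathcal{I}\mathcal{J}}$). If a discrepancy appears, I would first recheck the cross terms $\mathcal{I}[\mathcal{I}\mathcal{J}A,\mathcal{J}B]_D$ and $\mathcal{J}[\mathcal{I}A,\mathcal{I}\mathcal{J}B]_D$, since these are where anticommutation signs accumulate.
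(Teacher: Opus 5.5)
Your proposal is correct and is essentially the paper's argument. Both are direct algebraic verifications from the definitions, using only $\mathbb{R}$-bilinearity of the Dorfman bracket together with $\mathcal{I}\mathcal{J}=-\mathcal{J}\mathcal{I}$ and $\mathcal{I}^2=\mathcal{J}^2=-\mathrm{Id}$. The paper rewrites the left-hand side by substituting the definitions of the Nijenhuis tensors (for (1), after writing $2N_{\mathcal{I}\mathcal{J}}=N_{\mathcal{I}\mathcal{J}}+N_{\mathcal{J}\mathcal{I}}$), whereas you expand the right-hand side and compare coefficients. Your predicted bookkeeping for (1) is exactly what the expansion gives: the $32$ terms form $16$ pairs, twelve of which cancel, and the four terms of $N_{\mathcal{I}\mathcal{J}}(A,B)$ each occur with total coefficient $2$.
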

\begin{proof}
(1) Obviously, with respect to the Dorfman bracket \eqref{dorfman1}, we have
$$
[\mathcal{I} \mathcal{J} A, B]_D=-[\mathcal{J} \mathcal{I} A, B]_D
$$
since $\mathcal{I} \mathcal{J}=-\mathcal{J I}$. And the Dorfman bracket is linear in its first argument, i.e.,
$$
[A+B, C]_D=[A, C]_D+[B, C]_D.
$$
Then we have
\begin{equation}\label{o1}\begin{split}
&2N_{\mathcal{I}\mathcal{J}}(A, B)\\
= & [\mathcal{I}\mathcal{J} A, \mathcal{I}\mathcal{J} B]_D-\mathcal{I}\mathcal{J}[\mathcal{I}\mathcal{J} A, B]_D-\mathcal{I}\mathcal{J}[A, \mathcal{I}\mathcal{J} B]_D-[A, B]_D\\
+&[\mathcal{J}\mathcal{I} A, \mathcal{J}\mathcal{I} B]_D-\mathcal{J}\mathcal{I}[\mathcal{J}\mathcal{I} A, B]_D-\mathcal{J}\mathcal{I}[A, \mathcal{J}\mathcal{I} B]_D-[A, B]_D\\
=&\left(N_{\mathcal{I}}(\mathcal{J} A, \mathcal{J} B)+{\mathcal{I}[\mathcal{I}\mathcal{J}A,\mathcal{J}B]_D}+\mathcal{I}[\mathcal{J}A,\mathcal{I}\mathcal{J}B]_D+[\mathcal{J}A,\mathcal{J}B]_D\right)\\
+&\left(N_{\mathcal{J}}(\mathcal{I} A, \mathcal{I} B)+{\mathcal{J}[\mathcal{J}\mathcal{I}A,\mathcal{I}B]_D}+\mathcal{J}[\mathcal{I}A,\mathcal{J}\mathcal{I}B]_D+[\mathcal{I}A,\mathcal{I}B]_D\right)\\
-&{\mathcal{I}\mathcal{J}[\mathcal{I}\mathcal{J} A, B]_D}-\mathcal{I}\mathcal{J}[A, \mathcal{I}\mathcal{J} B]_D-[A, B]_D\\
-&{\mathcal{J}\mathcal{I}[\mathcal{J}\mathcal{I} A, B]_D}-\mathcal{J}\mathcal{I}[A, \mathcal{J}\mathcal{I} B]_D-[A, B]_D,
\end{split}\end{equation}
by the definition of $N_{\mathcal{I}}(\mathcal{J} A, \mathcal{J} B)$ and $N_{\mathcal{J}}(\mathcal{I} A, \mathcal{I} B)$.
And we have,
\begin{equation}\label{o200}\begin{split}
&\mathcal{I}[\mathcal{I}\mathcal{J}A,\mathcal{J}B]_D-\mathcal{J}\mathcal{I}[\mathcal{J}\mathcal{I} A, B]_D\\
=&\mathcal{I}\left(-[\mathcal{J}\mathcal{I}A,\mathcal{J}B]_D+\mathcal{J}[\mathcal{J}\mathcal{I} A, B]_D\right)\\
=&\mathcal{I}\left(-N_{\mathcal{J}}(\mathcal{I} A,B)-\mathcal{J}[\mathcal{I}A,\mathcal{J}B]_D-[\mathcal{I}A,B]_D\right),\\
\end{split}\end{equation}
by $\mathcal{I} \mathcal{J}=-\mathcal{J I}$ and the definition of $N_{\mathcal{J}}(\mathcal{I} A, B)$. Similarly,
\begin{equation}\label{o2}\begin{split}
&\mathcal{I}[\mathcal{J}A,\mathcal{I}\mathcal{J}B]_D-\mathcal{J}\mathcal{I}[ A, \mathcal{J}\mathcal{I}B]_D
=\mathcal{I}\left(-N_{\mathcal{J}}( A,\mathcal{I}B)-\mathcal{J}[\mathcal{J}A,\mathcal{I}B]_D-[A,\mathcal{I}B]_D\right),\\
&\mathcal{J}[\mathcal{J}\mathcal{I}A,\mathcal{I}B]_D-\mathcal{I}\mathcal{J}[\mathcal{I}\mathcal{J} A, B]_D=\mathcal{J}\left(-N_{\mathcal{I}}(\mathcal{J} A,B)-\mathcal{I}[\mathcal{J}A,\mathcal{I}B]_D-[\mathcal{J}A,B]_D\right),\\
&\mathcal{J}[\mathcal{I}A,\mathcal{J}\mathcal{I}B]_D-\mathcal{I}\mathcal{J}[A, \mathcal{I}\mathcal{J} B]_D=\mathcal{J}\left(-N_{\mathcal{I}}( A,\mathcal{J}B)-\mathcal{I}[\mathcal{I}A,\mathcal{J}B]_D-[A,\mathcal{J}B]_D\right).\\
\end{split}\end{equation}
Substituting \eqref{o200}-(\ref{o2}) into (\ref{o1}) yields
\begin{equation}\begin{split}
&2N_{\mathcal{I}\mathcal{J}}(A, B)\\
=&N_{\mathcal{I}}(\mathcal{J} A, \mathcal{J} B)+N_{\mathcal{J}}(\mathcal{I} A, \mathcal{I} B)-\mathcal{I} N_{\mathcal{J}}(\mathcal{I} A, B)-\mathcal{I} N_{\mathcal{J}}(A, \mathcal{I} B)
-\mathcal{J} N_{\mathcal{I}}(\mathcal{J} A, B)\\
-&\mathcal{J} N_{\mathcal{I}}(A, \mathcal{J} B)
+N_{\mathcal{I}}(A, B)+N_{\mathcal{J}}(A, B),
\end{split}\end{equation}
by $\mathcal{I} \mathcal{J}=-\mathcal{J I}$ and the definition of Nijenhuis tensor.\\
(2) By the definition \eqref{mnvf} and $\mathcal{I}\mathcal{J}=-\mathcal{J}\mathcal{I}$,
\begin{equation}\label{mnvf00}
\begin{aligned}
2\mathcal{N}(\mathcal{I},\mathcal{J})\left(A, B\right)= & {\left[\mathcal{I} A,\mathcal{J} B\right]_D+\left[\mathcal{J} A, \mathcal{I} B\right]_D }
 -\mathcal{I}\left[A, \mathcal{J} B\right]_D-\mathcal{I}\left[\mathcal{J} A,B\right]_D\\
-&\mathcal{J}\left[A, \mathcal{I} B\right]_D-\mathcal{J}\left[\mathcal{I}A, B\right]_D.
\end{aligned}
\end{equation}
By the definition of $\mathcal{N}_{\mathcal{I}\mathcal{J}}\left(\mathcal{J}A, \mathcal{I}B\right)$, $\mathcal{N}_{\mathcal{I}}\left(A, B\right)$ and $\mathcal{N}_{\mathcal{J}}\left(A, B\right)$, we have
\begin{equation}\label{dfcts1}\begin{split}
&{\left[\mathcal{I} A,\mathcal{J} B\right]_D+\left[\mathcal{J} A, \mathcal{I} B\right]_D }\\
=&-\mathcal{N}_{\mathcal{I}\mathcal{J}}\left(\mathcal{J}A, \mathcal{I}B\right)
+\mathcal{I}\mathcal{J}\left(\left[\mathcal{I} A, \mathcal{I} B\right]_D-\left[\mathcal{J} A, \mathcal{J} B\right]_D  \right)\\
=&-\mathcal{N}_{\mathcal{I}\mathcal{J}}\left(\mathcal{J}A, \mathcal{I}B\right)
+\mathcal{I}\mathcal{J}\left(\mathcal{N}_{\mathcal{I}}\left(A, B\right)+ \mathcal{I}\left[\mathcal{I} A, B\right]_D+ \mathcal{I}\left[ A, \mathcal{I}B\right]_D+\left[ A, B\right]_D\right)\\
&-\mathcal{I}\mathcal{J}\left(\mathcal{N}_{\mathcal{J}}\left(A, B\right)+ \mathcal{J}\left[\mathcal{J} A, B\right]_D+ \mathcal{J}\left[ A, \mathcal{J}B\right]_D+\left[ A, B\right]_D \right)\\
=&-\mathcal{N}_{\mathcal{I}\mathcal{J}}\left(\mathcal{J}A, \mathcal{I}B\right)
+\mathcal{I}\mathcal{J}\left(\mathcal{N}_{\mathcal{I}}\left(A, B\right)-\mathcal{N}_{\mathcal{J}}\left(A, B\right)\right)
+\mathcal{J}\left[\mathcal{I} A, B\right]_D+ \mathcal{J}\left[ A, \mathcal{I}B\right]_D\\
&+ \mathcal{I}\left[\mathcal{J} A, B\right]_D+ \mathcal{I}\left[ A, \mathcal{J}B\right]_D.
\end{split}\end{equation}
Substituting \eqref{dfcts1} into \eqref{mnvf00} yields
\begin{equation*}\label{mnvf0000}
\begin{aligned}
2\mathcal{N}(\mathcal{I},\mathcal{J})\left(A, B\right)= & -\mathcal{N}_{\mathcal{I}\mathcal{J}}\left(\mathcal{J}A, \mathcal{I}B\right)
+\mathcal{I}\mathcal{J}\left(\mathcal{N}_{\mathcal{I}}\left(A, B\right)-\mathcal{N}_{\mathcal{J}}\left(A, B\right)\right),
\end{aligned}
\end{equation*}
i.e., the second term of Lemma \ref{nea} holds.
\end{proof}
Then the following result in \cite{hyc} and \cite{HongStienon2015} is a direct corollary of the Lemma \ref{nea}.
\begin{lem}(\cite{hyc,HongStienon2015}). \label{nea000}For generalized complex structures $\mathcal{I}$ and $\mathcal{J}$, if $\mathcal{I}\mathcal{J}=-\mathcal{J}\mathcal{I}$, then
\begin{itemize}
\item[(1)]
$N_{\mathcal{I}\mathcal{J}}=0$;
\item[(2)]
$N\left(\mathcal{I},\mathcal{J}\right)=0$.
\end{itemize}
\end{lem}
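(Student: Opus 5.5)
The plan is to read Lemma \ref{nea000} directly off the two identities of Lemma \ref{nea}. Those identities hold for arbitrary anticommuting almost generalized complex structures. The hypothesis that $\mathcal{I}$ and $\mathcal{J}$ are generalized complex structures means precisely that $N_{\mathcal{I}}=N_{\mathcal{J}}=0$ as maps $\Gamma(\mathbb{T}M)\times\Gamma(\mathbb{T}M)\to\Gamma(\mathbb{T}M)$, for all sections.

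For part (1), I substitute $N_{\mathcal{I}}=N_{\mathcal{J}}=0$ into Lemma \ref{nea}(1). Every term on the right-hand side is of the form $N_{\mathcal{I}}$ or $N_{\mathcal{J}}$ evaluated at some pair of sections, possibly composed with $\mathcal{I}$ or $\mathcal{J}$. The pairs occurring are $(\mathcal{J}A,\mathcal{J}B)$, $(\mathcal{I}A,B)$, $(A,\mathcal{I}B)$, and so on, and each is again a pair of smooth sections of $\mathbb{T}M$. So every term vanishes and $N_{\mathcal{I}\mathcal{J}}(A,B)=0$ for all $A,B$. I would also remark that $\mathcal{I}\mathcal{J}$ is itself an almost generalized complex structure. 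It is orthogonal as a composition of orthogonal maps, and
\[
(\mathcal{I}\mathcal{J})^2=\mathcal{I}\mathcal{J}\mathcal{I}\mathcal{J}=-\mathcal{I}^2\mathcal{J}^2=-\mathrm{Id}.
\]
Hence $N_{\mathcal{I}\mathcal{J}}=0$ says exactly that $\mathcal{I}\mathcal{J}$ is integrable.

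For part (2), I substitute into Lemma \ref{nea}(2). The term $\mathcal{I}\mathcal{J}(N_{\mathcal{I}}(A,B)-N_{\mathcal{J}}(A,B))$ vanishes directly. The remaining term is $-N_{\mathcal{I}\mathcal{J}}(\mathcal{J}A,\mathcal{I}B)$, and it vanishes by part (1) applied to the sections $\mathcal{J}A$ and $\mathcal{I}B$. Therefore $\mathcal{N}(\mathcal{I},\mathcal{J})=0$.

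I expect no real obstacle here, since the substantive computation is already contained in Lemma \ref{nea}. The only points to check are that the identities there hold pointwise for all sections, with no tensoriality needed, and that part (1) must be established before part (2), because (2) uses it.
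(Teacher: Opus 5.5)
Your proposal is correct and matches the paper's proof. Both parts follow from Lemma~\ref{nea} by setting $N_{\mathcal I}=N_{\mathcal J}=0$, and part (1) is then applied to the sections $\mathcal{J}A,\mathcal{I}B$ to kill the term $-N_{\mathcal{I}\mathcal{J}}(\mathcal{J}A,\mathcal{I}B)$ in part (2); your additional check that $\mathcal{I}\mathcal{J}$ is an orthogonal almost generalized complex structure is a harmless point the paper leaves implicit.
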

\begin{proof}
(1) Because $N_{\mathcal{I}} = N_{\mathcal{J}} = 0$, the conclusion (1) follows immediately from the first term of Lemma~\ref{nea}.\\
(2)By $N_{\mathcal{I}} = N_{\mathcal{J}} = 0$, Lemma \ref{nea}--(2) and Lemma \ref{nea000}--(1), (2) holds clearly.

\end{proof}
\begin{lem}\label{pl11}
Let $\mathcal I,\mathcal J \in \Gamma(\mathrm{End}\,\mathbb{T} M)$ satisfy
\begin{equation}\label{I-J-cond}
\mathcal I^2=-\mathrm{Id}, \qquad
\mathcal I\mathcal J+\mathcal J\mathcal I=0.
\end{equation}
Then
\begin{equation}\label{beul11}
2\mathcal N(\mathcal I,\mathcal I\mathcal J)(A,B)=\mathcal N_{\mathcal I}(\mathcal J A,B)+\mathcal N_{\mathcal I}(A,\mathcal J B)+2\mathcal I\mathcal N(\mathcal I,\mathcal J)(A,B),
\end{equation}
where $A,B \in \Gamma(\mathbb{T} M)$.
\end{lem}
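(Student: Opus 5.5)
The plan is a direct expansion: write both sides of \eqref{beul11} in terms of Dorfman brackets using the definitions \eqref{mnvf} and \eqref{mnvf111}, and check that they agree term by term. The only inputs are the algebraic relations \eqref{I-J-cond} and the fact that $[\cdot,\cdot]_D$ is $\mathbb{R}$-bilinear, so that constant-coefficient endomorphism identities such as $\mathcal I\mathcal J=-\mathcal J\mathcal I$ can be moved inside either slot of the bracket. Neither orthogonality nor integrability is used.

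\textbf{Step 1 (left-hand side).} Set $\mathcal K=\mathcal I\mathcal J$. From \eqref{I-J-cond}, $\mathcal I\mathcal K=\mathcal I^2\mathcal J=-\mathcal J$ and $\mathcal K\mathcal I=\mathcal I\mathcal J\mathcal I=-\mathcal I^2\mathcal J=\mathcal J$. Hence $\mathcal I\mathcal K+\mathcal K\mathcal I=0$, and the $[A,B]_D$ term drops out of \eqref{mnvf}. This leaves
\[
2\mathcal N(\mathcal I,\mathcal I\mathcal J)(A,B)=[\mathcal IA,\mathcal I\mathcal JB]_D+[\mathcal I\mathcal JA,\mathcal IB]_D-\mathcal I[A,\mathcal I\mathcal JB]_D-\mathcal I[\mathcal I\mathcal JA,B]_D-\mathcal I\mathcal J[A,\mathcal IB]_D-\mathcal I\mathcal J[\mathcal IA,B]_D .
\]

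\textbf{Step 2 (right-hand side).} First expand $\mathcal N_{\mathcal I}(\mathcal JA,B)$ and $\mathcal N_{\mathcal I}(A,\mathcal JB)$ via \eqref{mnvf111}. Together these give
\[
[\mathcal I\mathcal JA,\mathcal IB]_D+[\mathcal IA,\mathcal I\mathcal JB]_D-\mathcal I[\mathcal I\mathcal JA,B]_D-\mathcal I[A,\mathcal I\mathcal JB]_D-\mathcal I[\mathcal JA,\mathcal IB]_D-\mathcal I[\mathcal IA,\mathcal JB]_D-[\mathcal JA,B]_D-[A,\mathcal JB]_D .
\]
Next, since $\mathcal I\mathcal J+\mathcal J\mathcal I=0$, the definition \eqref{mnvf} gives $2\mathcal N(\mathcal I,\mathcal J)$ as six bracket terms. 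Applying $\mathcal I$ and using $\mathcal I^2=-\mathrm{Id}$ yields
\[
2\mathcal I\mathcal N(\mathcal I,\mathcal J)(A,B)=\mathcal I[\mathcal IA,\mathcal JB]_D+\mathcal I[\mathcal JA,\mathcal IB]_D+[A,\mathcal JB]_D+[\mathcal JA,B]_D-\mathcal I\mathcal J[A,\mathcal IB]_D-\mathcal I\mathcal J[\mathcal IA,B]_D .
\]

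\textbf{Step 3 (comparison).} Adding the two expressions from Step 2, four pairs cancel:
\[
\mathcal I[\mathcal JA,\mathcal IB]_D,\qquad \mathcal I[\mathcal IA,\mathcal JB]_D,\qquad [\mathcal JA,B]_D,\qquad [A,\mathcal JB]_D .
\]
The six surviving terms are exactly those of Step 1, which proves \eqref{beul11}.

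There is no real obstacle here. The only point needing care is the bookkeeping of the $\mathcal I\mathcal K+\mathcal K\mathcal I$ term and the signs introduced when $\mathcal I$ is pushed through $\mathcal I^2=-\mathrm{Id}$. The key observation is that this term vanishes because $\mathcal I$ anticommutes with $\mathcal I\mathcal J$, which is what makes the identity close up without any integrability assumption.
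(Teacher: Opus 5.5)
Your proposal is correct and follows essentially the paper's own argument. Both proofs expand $2\mathcal N(\mathcal I,\mathcal I\mathcal J)$, drop the $[A,B]_D$ term because $\mathcal I$ anticommutes with $\mathcal I\mathcal J$, and match the remaining six bracket terms against the definitional expansions of $\mathcal N_{\mathcal I}(\mathcal JA,B)+\mathcal N_{\mathcal I}(A,\mathcal JB)$ and $2\mathcal I\mathcal N(\mathcal I,\mathcal J)(A,B)$; the paper transforms the left side forward into the right side, while you expand both sides and cancel, which is only a difference in bookkeeping.
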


\begin{proof}
It follows from $\mathcal{I}^2 = -\mathrm{Id}$ and \eqref{mnvf111} that
\begin{equation}\label{transport}
[\mathcal I A,\mathcal I B]_D
=\mathcal N_{\mathcal I}(A,B)+
\mathcal I[\mathcal I A,B]_D
+\mathcal I[A,\mathcal I B]_D
+[A,B]_D,
\end{equation}
for all $A,B \in \Gamma(\mathbb{T} M)$.
Expanding $\mathcal{N}(\mathcal{I}, \mathcal{I}\mathcal{J})$ via the definition \eqref{mnvf}, we obtain
\begin{equation}\label{expand}
\begin{aligned}
&2\mathcal N(\mathcal I,\mathcal I\mathcal J)(A,B)\\
=&[\mathcal I A,\mathcal I\mathcal J B]_D+[\mathcal I\mathcal J A,\mathcal I B]_D
-\mathcal I[A,\mathcal I\mathcal J B]_D
-\mathcal I[\mathcal I\mathcal J A,B]_D
\\
-&(\mathcal I\mathcal J)[A,\mathcal I B]_D
-(\mathcal I\mathcal J)[\mathcal I A,B]_D
+\mathcal I(\mathcal I\mathcal J)[A,B]_D+(\mathcal I\mathcal J)\mathcal I[A,B]_D.
\end{aligned}
\end{equation}
Using $\mathcal I^2=-\mathrm{Id}$ and
$\mathcal I\mathcal J=-\mathcal J\mathcal I$, we obtain
\[
\mathcal I(\mathcal I\mathcal J)=-\mathcal J,
\qquad
(\mathcal I\mathcal J)\mathcal I=\mathcal J,
\]
so the last two terms in \eqref{expand} cancel.
Hence
\begin{equation}\label{reduced}
\begin{aligned}
2\mathcal N(\mathcal I,\mathcal I\mathcal J)(A,B)
&=[\mathcal I A,\mathcal I\mathcal J B]_D+[\mathcal I\mathcal J A,\mathcal I B]_D
-\mathcal I\left([A,\mathcal I\mathcal J B]_D+[\mathcal I\mathcal J A,B]_D\right)\\
&-\mathcal I\mathcal J\left([A,\mathcal I B]_D+[\mathcal I A,B]_D\right).
\end{aligned}
\end{equation}
Since $\mathcal I\mathcal J B=\mathcal I(\mathcal J B)$ and
$\mathcal I\mathcal J A=\mathcal I(\mathcal J A)$,
apply \eqref{transport} twice:
\begin{equation}\label{cor01}\begin{split}
[\mathcal I A,\mathcal I(\mathcal J B)]_D
&=\mathcal N_{\mathcal I}(A,\mathcal J B)+\mathcal I[\mathcal I A,\mathcal J B]_D
+\mathcal I[A,\mathcal I\mathcal J B]_D
+[A,\mathcal J B]_D, \\
[\mathcal I(\mathcal J A),\mathcal I B]_D
&=\mathcal N_{\mathcal I}(\mathcal J A,B)+\mathcal I[\mathcal I\mathcal J A,B]_D
+\mathcal I[\mathcal J A,\mathcal I B]_D
+[\mathcal J A,B]_D.
\end{split}\end{equation}
Substituting \eqref{cor01} into \eqref{reduced}
cancels the middle $\mathcal I$-terms and gives
\begin{equation}\label{almost}
\begin{aligned}
2\mathcal N(\mathcal I,\mathcal I\mathcal J)(A,B)
&=\mathcal N_{\mathcal I}(\mathcal J A,B)+\mathcal N_{\mathcal I}(A,\mathcal J B)+\mathcal I[\mathcal I A,\mathcal J B]_D
+\mathcal I[\mathcal J A,\mathcal I B]_D\\
&+[A,\mathcal J B]_D
+[\mathcal J A,B]_D-(\mathcal I\mathcal J)[A,\mathcal I B]_D
-(\mathcal I\mathcal J)[\mathcal I A,B]_D.
\end{aligned}
\end{equation}
By the definition of $\mathcal N(\mathcal I,\mathcal J)$, we have
\begin{equation}\label{mix}\begin{split}
[\mathcal I A,\mathcal J B]_D
+[\mathcal J A,\mathcal I B]_D
&=2\mathcal N(\mathcal I,\mathcal J)(A,B)+\mathcal I[A,\mathcal J B]_D
+\mathcal I[\mathcal J A,B]_D\\
&+\mathcal J[A,\mathcal I B]_D
+\mathcal J[\mathcal I A,B]_D.
\end{split}\end{equation}
Apply $\mathcal I$ to \eqref{mix} and use $\mathcal I^2=-\mathrm{Id}$:
\small{\begin{equation}\label{I-mix}
\begin{aligned}
\mathcal I[\mathcal I A,\mathcal J B]_D
+\mathcal I[\mathcal J A,\mathcal I B]_D
&=2\mathcal I\mathcal N(\mathcal I,\mathcal J)(A,B)-[A,\mathcal J B]_D
-[\mathcal J A,B]_D\\
&+(\mathcal I\mathcal J)[A,\mathcal I B]_D
+(\mathcal I\mathcal J)[\mathcal I A,B]_D.
\end{aligned}
\end{equation}}
Substituting \eqref{I-mix} into \eqref{almost}
shows
\[
2\mathcal N(\mathcal I,\mathcal I\mathcal J)(A,B)=\mathcal N_{\mathcal I}(\mathcal J A,B)+\mathcal N_{\mathcal I}(A,\mathcal J B)+2\mathcal I\mathcal N(\mathcal I,\mathcal J)(A,B),
\]
proving the claim.
\end{proof}

\subsection{Generalized real structure}

Motivated by the geometric meaning of integrability for generalized
complex structures, we introduce generalized real structures on a
Dorfman algebroid and investigate their integrability. The condition
$\mathcal G^{2}=\operatorname{Id}$ gives the algebraic decomposition
$\mathbb{T}M=\mathbb{T}M_{+}\oplus\mathbb{T}M_{-}$
into the $\pm1$ eigenbundles of $\mathcal G$. Integrability strengthens
this algebraic decomposition by requiring each eigenbundle to be
involutive under the Dorfman bracket. In this sense, an integrable
generalized real structure determines a decomposition of
$\mathbb{T}M$ into two complementary Dorfman-involutive subbundles.

\begin{de}
We call $\mathcal{G}$ a generalized almost-real structure if $\mathcal{G}^{2}=1$ and $\mathcal{G} \in O(n, n)$.
\end{de}
So the eigenspace of $\mathcal{G}$ w.r.t eigenvalue $1$ has the form
$$Ker(\mathcal{G}-{\rm Id})=\{A+\mathcal{G}A|\forall A\in\mathbb{T} M\}$$
and eigenspace of $\mathcal{G}$ w.r.t eigenvalue $-1$ has the form
$$Ker(\mathcal{G}+{\rm Id})=\{A-\mathcal{G}A|\forall A\in\mathbb{T} M\}.$$
We will show that the integrability is characterized by the vanishing of an associated Nijenhuis tensor. This provides a real analogue of generalized complex geometry and
gives a natural framework for decomposing the generalized tangent bundle into complementary Dorfman-involutive sectors.
\begin{de}
We call the generalized almost-real structure $\mathcal{G}$ a generalized real structure if $Ker(\mathcal{G}\pm{\rm Id})$ is closed with respect to the Dorfman bracket (\ref{dorfman1}), i.e.
\begin{equation}
[Ker(\mathcal{G}\pm{\rm Id}),Ker(\mathcal{G}\pm{\rm Id})]\subseteq Ker(\mathcal{G}\pm{\rm Id}).
\end{equation}
\end{de}
We define the Nijenhuis tensor of real structure $\mathcal{G}$ as
\begin{equation}\label{nittsrr}
N_{\mathcal{G}}(A, B)=[\mathcal{G}A, \mathcal{G} B]_D-\mathcal{G}[\mathcal{G}A, B]_D-\mathcal{G}[A, \mathcal{G}B]_D+[A, B]_D\quad (A,B\in\mathbb{T} M ),
\end{equation}
which differs slightly from the definition of the Nijenhuis tensor in the case of a generalized complex structure.
\begin{lem}
The vanishing of $N_{\mathcal{G}}$ is equivalent to the integrability of the real structure $\mathcal{G}$.
\end{lem}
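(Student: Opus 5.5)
We will prove the lemma by relating $N_{\mathcal G}$ to the projections onto the eigenbundles. Put
\[
P_\pm=\tfrac12(\mathrm{Id}\pm\mathcal G),
\]
so that $P_\pm^2=P_\pm$, $P_+P_-=0$, $P_++P_-=\mathrm{Id}$, and $\mathrm{Ker}(\mathcal G\mp\mathrm{Id})=\mathrm{Im}\,P_\pm$. The first step is an algebraic identity expressing $N_{\mathcal G}$ through brackets of projected sections.

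Expanding $\mathcal G A=P_+A-P_-A$ and $\mathcal G B=P_+B-P_-B$ in \eqref{nittsrr}, using the bilinearity of the Dorfman bracket (it is $\mathbb R$-bilinear; only constant coefficients appear here, so no anchor terms arise), we write $A=A_++A_-$ and $B=B_++B_-$ with $A_\pm=P_\pm A$. Then $\mathcal G A=A_+-A_-$, and we expand each of the four terms of $N_{\mathcal G}(A,B)$ into the four brackets $[A_\epsilon,B_\delta]_D$.

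\textbf{Same-sign brackets.} For a bracket $C=[A_+,B_+]_D$, the coefficient is
\[
C-\mathcal G C-\mathcal G C+C=2(C-\mathcal G C)=4P_-[A_+,B_+]_D .
\]
Similarly, for $[A_-,B_-]_D$ the signs give $(+1)C+\mathcal G C+\mathcal G C+C=4P_+[A_-,B_-]_D$.

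\textbf{Mixed brackets.} For $[A_+,B_-]_D$ the terms are $-C-\mathcal G(-C)\cdot$\dots; tracking carefully, we get $-C+\mathcal G C-\mathcal G C+C=0$, and likewise for $[A_-,B_+]_D$.

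This yields the key identity
\[
N_{\mathcal G}(A,B)=4\bigl(P_-[P_+A,P_+B]_D+P_+[P_-A,P_-B]_D\bigr).
\]

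The equivalence then follows. If both eigenbundles are Dorfman-closed, then $[P_+A,P_+B]_D\in\mathrm{Im}P_+$ is killed by $P_-$, and symmetrically for the other term, so $N_{\mathcal G}=0$. Conversely, if $N_{\mathcal G}=0$, take $A,B\in\mathrm{Ker}(\mathcal G-\mathrm{Id})$, so that $P_-A=P_-B=0$. The identity gives $4P_-[A,B]_D=0$, hence $[A,B]_D\in\mathrm{Ker}(\mathcal G-\mathrm{Id})$. The same argument works for the $-1$ eigenbundle.

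The only real obstacle is the careful sign bookkeeping in the mixed terms. We will double-check it by directly evaluating $N_{\mathcal G}(A,B)$ for $A\in\mathrm{Ker}(\mathcal G-\mathrm{Id})$ and $B\in\mathrm{Ker}(\mathcal G+\mathrm{Id})$. Here $\mathcal GA=A$ and $\mathcal GB=-B$, so
\[
N_{\mathcal G}(A,B)=-[A,B]_D+\mathcal G[A,B]_D-\mathcal G[A,B]_D+[A,B]_D=0,
\]
which confirms the vanishing. For eigenvectors of equal sign it gives $2[A,B]_D\mp2\mathcal G[A,B]_D$, which is consistent with the identity above. Orthogonality of $\mathcal G$ is not needed for this lemma.
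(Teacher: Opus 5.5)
Your proof is correct and is essentially the paper's argument. The paper also splits $A$ and $B$ into $\mathcal G$-eigencomponents and observes that the mixed-sign terms of $N_{\mathcal G}$ vanish identically, while the equal-sign terms vanish exactly when the eigenbundles are involutive. Your identity $N_{\mathcal G}(A,B)=4\bigl(P_-[P_+A,P_+B]_D+P_+[P_-A,P_-B]_D\bigr)$ packages that same computation into one formula. This gives both directions at once, whereas the paper proves $N_{\mathcal G}=0\Rightarrow$ involutivity by a separate computation of $\mathcal G[A\pm\mathcal G A,B\pm\mathcal G B]_D$, and it avoids the paper's unnecessary complexification.
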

\begin{proof}
For $A,B\in\mathbb{T} M$, we have
\begin{equation}
[A\pm\mathcal{G}A,B\pm\mathcal{G}B]_D=[A,B]_D\pm[A,\mathcal{G}B]_D\pm[\mathcal{G}A,B]_D+[\mathcal{G}A,\mathcal{G}B]_D.
\end{equation}
If $N_{\mathcal{G}}(A,B)=0$, then
\[
\mathcal{G}\left([A,\mathcal{G}B]_D+[\mathcal{G}A,B]_D\right)
=[A,B]_D+[\mathcal{G}A,\mathcal{G}B]_D,
\]
and hence
\[
\mathcal{G}[A\pm\mathcal{G}A,B\pm\mathcal{G}B]_D
=\pm[A\pm\mathcal{G}A,B\pm\mathcal{G}B]_D.
\]
Thus both eigenbundles are involutive.

Conversely, suppose that both eigenbundles are involutive. After complexification, write
$A=A_++A_-$ and $B=B_++B_-$ with
$\mathcal{G}A_\varepsilon=\varepsilon A_\varepsilon$ and
$\mathcal{G}B_\delta=\delta B_\delta$ for
$\varepsilon,\delta\in\{\pm1\}$. If $\varepsilon=\delta$, involutivity gives
$[A_\varepsilon,B_\varepsilon]_D$ in the same eigenspace, and a direct substitution into \eqref{nittsrr} gives
$N_{\mathcal{G}}(A_\varepsilon,B_\varepsilon)=0$. If
$\varepsilon=-\delta$, the same substitution gives
$N_{\mathcal{G}}(A_\varepsilon,B_\delta)=0$ identically. By bilinearity,
$N_{\mathcal{G}}=0$.
\end{proof}

The generalized metric, introduced by Hitchin and developed by Gualtieri,
is the analogue of a Riemannian metric in generalized geometry.
A generalized metric is an endomorphism
$\mathcal G \in \mathrm{End}(\mathbb{T} M)\cap O(n, n)$ such that
$\mathcal G^2 = \mathrm{Id}$,
and $\langle \mathcal{G} A, A\rangle > 0$ for all nonzero $A \in \mathbb{T} M$.
A generalized metric is
equivalent to a pair $(g,b)$ consisting of a Riemannian metric $g$ and a $2$-form $b$, and it has the specific expression
$$
\mathcal{G} =\left(\begin{array}{ll}
1 &0 \\
b & 1
\end{array}\right)\left(\begin{array}{cc}
0&g^{-1}  \\
g &0
\end{array}\right)\left(\begin{array}{cc}
1 &0 \\
-b & 1
\end{array}\right)=\left(\begin{array}{cc}
-g^{-1} b & g^{-1} \\
g-b g^{-1} b & b g^{-1}
\end{array}\right) .
$$
However, it is not integrable.
\begin{prop}The generalized metric $\mathcal{G}$, viewed as a generalized almost real structure, is in general not integrable.
\end{prop}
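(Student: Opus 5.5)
The plan is to exhibit one explicit generalized metric for which an eigenbundle of $\mathcal G$ fails to be closed under the Dorfman bracket \eqref{dorfman1}. By the preceding lemma, this is the same as showing $N_{\mathcal G}\neq 0$. Since the claim is only ``in general not integrable'', one counterexample suffices. I will take the simplest case: $M=\mathbb R^2$, $b=0$, and $g$ the flat Euclidean metric, so that
\[
\mathcal G=\begin{pmatrix}0&g^{-1}\\ g&0\end{pmatrix}.
\]

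\textbf{Step 1: the eigenbundles.} With $b=0$ we have $\mathcal G(X+gX)=g^{-1}gX+gX=X+gX$. Hence
\[
\mathrm{Ker}(\mathcal G-\mathrm{Id})=\{X+gX\},\qquad \mathrm{Ker}(\mathcal G+\mathrm{Id})=\{X-gX\}.
\]
Concretely, $\partial_i+dx^i$ spans the $+1$ eigenbundle and $\partial_i-dx^i$ spans the $-1$ eigenbundle.

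\textbf{Step 2: a bracket leaving the $+1$ eigenbundle.} Choose $X=x_2\partial_1$ and $Y=\partial_2$, so $gX=x_2\,dx^1$ and $gY=dx^2$. Then:
\begin{itemize}
\item $[X,Y]=-\partial_1$;
\item $L_X dx^2=d(\iota_X dx^2)=0$;
\item $\iota_Y d(x_2\,dx^1)=\iota_{\partial_2}(dx^2\wedge dx^1)=-dx^1$.
\end{itemize}
Substituting into \eqref{dorfman1} gives
\[
[X+gX,\,Y+gY]_D=-\partial_1+dx^1 .
\]
This lies in $\mathrm{Ker}(\mathcal G+\mathrm{Id})$, not in $\mathrm{Ker}(\mathcal G-\mathrm{Id})$, so the $+1$ eigenbundle is not Dorfman-involutive.

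\textbf{Step 3: the Nijenhuis tensor.} Equivalently, set $A=X+gX$ and $B=Y+gY$, so $\mathcal GA=A$ and $\mathcal GB=B$. Then \eqref{nittsrr} reduces to
\[
N_{\mathcal G}(A,B)=2\bigl([A,B]_D-\mathcal G[A,B]_D\bigr)=2(-\partial_1+dx^1)-2(\partial_1-dx^1)=4(-\partial_1+dx^1)\neq0 .
\]

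\textbf{Step 4: extension and main obstacle.} A $B$-field gauge $e^{b}$ intertwines $\mathcal G_{(g,0)}$ and $\mathcal G_{(g,b)}$, and it preserves the $H$-twisted bracket \eqref{dorfman2} with $H$ shifted by $db$. So the same failure persists for generic $(g,b)$; this is only remarked, not needed. The only real obstacle is choosing sections whose bracket has a nonzero component outside the eigenbundle. The naive choice $X=\partial_1$, $Y=x_1\partial_2$ stays inside the $+1$ eigenbundle, which is why the choice above uses the $\iota_Y d\xi$ term asymmetrically.
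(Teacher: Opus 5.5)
Your overall strategy is fine: a single explicit counterexample suffices for ``in general not integrable''. But the counterexample you chose does not work, because Step~2 contains a sign error. Since $d(x_2\,dx^1)=dx^2\wedge dx^1$, one has $\iota_{\partial_2}(dx^2\wedge dx^1)=dx^2(\partial_2)\,dx^1-dx^1(\partial_2)\,dx^2=+dx^1$, not $-dx^1$. Therefore $[X+gX,\,Y+gY]_D=-\partial_1+0-dx^1=-(\partial_1+dx^1)$, which lies in $\mathrm{Ker}(\mathcal G-\mathrm{Id})$. Consequently $N_{\mathcal G}(A,B)=2\bigl([A,B]_D-\mathcal G[A,B]_D\bigr)=0$, so Steps~2 and~3 are false and no failure of integrability has been exhibited.

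This is not a fluke. For $b=0$, expanding with the Levi-Civita connection $\nabla$ of $g$ gives $[X+gX,\,Y+gY]_D=[X,Y]+g[X,Y]+2\,g(\nabla_{\cdot}X,Y)$. The first two terms lie in $C_+$, so the only possible obstruction is the $1$-form $g(\nabla_{\cdot}X,Y)$. In your example $\nabla X=dx^2\otimes\partial_1$ is orthogonal to $Y=\partial_2$, so this term vanishes, exactly as in the ``naive'' choice you discarded. The asymmetric use of $\iota_Y d\xi$ does not help.

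The repair is to choose $Y$ so that it sees $\nabla X$. For instance, keep $X=x_2\partial_1$ and take $Y=\partial_1$. Then $[x_2\partial_1,\partial_1]=0$, $L_{x_2\partial_1}dx^1=dx^2$ and $\iota_{\partial_1}(dx^2\wedge dx^1)=-dx^2$, so $[A,B]_D=2\,dx^2$. This is a nonzero pure $1$-form, so it cannot lie in $C_+=\{X+gX\}$, because $C_+\cap T^*M=0$. Indeed $N_{\mathcal G}(A,B)=4(dx^2-\partial_2)\neq0$.

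The paper's proof uses the diagonal version of this. For $A=X+(b+g)X\in C_+$ it uses $[A,A]_D=\mathcal D\langle A,A\rangle=d\bigl(g(X,X)\bigr)$, where the $b$-part drops out by skew-symmetry. This is a pure $1$-form, so involutivity of $C_+$ would force $d(g(X,X))=0$ for all $X$. That fails, e.g.\ for $X=x_1\partial_1$ on flat $\mathbb R^2$, which gives $2x_1\,dx^1$. This route needs no sign-sensitive bookkeeping and applies to every pair $(g,b)$ directly, without the $B$-field transport of your Step~4.
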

\begin{proof}
Let $\mathcal{G}$ be the generalized metric determined by a Riemannian metric $g$ and a 2 -form $b$. Its +1 eigenbundle is
$
C_{+}=\{X+(b+g)(X) \mid X \in T M\}.
$
The Dorfman bracket \eqref{dorfman1} satisfies the compatibility condition
$$
[A, A]_D=\mathcal{D}\langle A, A\rangle,
$$
where $\mathcal{D}$ is the Courant algebroid differential associated with the anchor map $\rho$, defined by $\langle\mathcal{D} f, A\rangle=\frac{1}{2} \rho(A) f$.
Taking $A=X+(b+g)(X) \in C_{+}$, we have
$$
[A, A]_D =\mathcal{D}\langle X+(b+g)(X), X+(b+g)(X)\rangle =\mathcal{D}(g(X, X)).
$$
If $\mathcal{G}$ were an integrable generalized real structure, then $C_{+}$ would be closed under the Dorfman bracket, hence
$$
[A, A]_D \in C_{+} \quad \text { for all } A \in C_{+} .
$$
But $\mathcal{D}(g(X, X))$ is a pure 1-form, while $C_{+} \cap T^* M=\{0\}$. Therefore one must have
$$
\mathcal{D}(g(X, X))=0,
$$
or equivalently,
$$
{\rm d}(g(X, X))=0 \quad \text { for all } X \in T M .
$$
This condition is incompatible with a nondegenerate metric $g$ in general. Hence the eigenbundles $C_{ \pm}$ of a generalized metric are not closed under the Dorfman bracket, and the associated almost real structure $\mathcal{G}$ is typically not integrable.
\end{proof}
\section{Generalized Clifford manifold }
A rank-3 Clifford-Hermitian manifold is a Riemannian manifold whose tangent bundle carries a pointwise representation of the rank-3 Clifford algebra \cite{MRclihma}. Concretely, it is equipped with three anticommuting orthogonal almost complex structures $I_1, I_2, I_3$, each compatible with the Riemannian metric. As a natural generalization, we introduce the generalized Clifford manifold by generalized complex structures on the Dorfman algebroid $T M \oplus T^* M$. This extension preserves many structural features of the classical theory while greatly enlarging the class of admissible geometries.
\begin{de}\label{gcmfd}
A (twisted) rank-$r$ generalized Clifford structure on $\mathbb{T}M$ consists of $r$ almost generalized complex structures
\[
I_1,\dots,I_r \in \mathrm{End}(\mathbb{T}M)
\]
satisfying the Clifford-type relations
 \begin{equation}\label{ogcl110}
\mathcal{I}_i \mathcal{I}_j+\mathcal{I}_j \mathcal{I}_i=-2 \delta_{i j} \mathrm{Id},
 \end{equation}
together with the integrability conditions
 \begin{equation}\label{ogcl111}
 \mathcal{N}(\mathcal{I}_{i},\mathcal{I}_{i})=0,\quad i=1,2,\cdots,r,
 \end{equation}
where $\mathcal N(\cdot,\cdot)$ denotes the generalized Nijenhuis concomitant associated with the (twisted) Dorfman bracket. A manifold endowed with such a structure is called a (twisted) rank-$r$ generalized Clifford manifold.
\end{de}
In the rank-$3$ case, Theorem~\ref{pullup001} shows that all off-diagonal mixed generalized Nijenhuis concomitants vanish, while the three diagonal concomitants $\mathcal N(\mathcal I_i,\mathcal J_i)$ coincide. In particular, the induced triple $(\mathcal J_1,\mathcal J_2,\mathcal J_3)$ is integrable and every rank-$3$ generalized Clifford structure canonically determines a generalized hypercomplex structure.\\
{\bf Proof of Theorem \ref{pullup001}}
($\Longrightarrow$) For $i\neq j$, the Clifford relations give
$\mathcal I_i\mathcal I_j=-\mathcal I_j\mathcal I_i$. Hence Lemma~\ref{nea000} yields
\begin{equation}\label{mixed001}
\mathcal{N}\left(\mathcal{I}_i,\mathcal{I}_j\right)=0,\qquad i\neq j.
\end{equation}
Together with \eqref{ogcl1110106}, this gives
\begin{equation}\label{ogcl111001}
\mathcal{N}\left(\mathcal{I}_i,\mathcal{I}_j\right)=0,\qquad i,j=1,2,3.
\end{equation}
For $i\neq j$, Lemma~\ref{nea000}--(1) also shows that
$\mathcal I_i\mathcal I_j$ is integrable. Since
$\mathcal J_k=\frac12\varepsilon_{kij}\mathcal I_i\mathcal I_j$, each
$\mathcal J_k$ is integrable. Moreover, the $\mathcal J_i$ satisfy the quaternionic relations, so Lemma~\ref{nea000} gives
\begin{equation}\label{ogcl111002}
\mathcal{N}\left(\mathcal{J}_i,\mathcal{J}_j\right)=0,\qquad i,j=1,2,3.
\end{equation}
By \eqref{coincide}, for $i\neq j$ one has
$\mathcal I_i\mathcal J_j=-\mathcal J_j\mathcal I_i$. Since both
$\mathcal I_i$ and $\mathcal J_j$ are integrable, Lemma~\ref{nea000} implies
\begin{equation}\label{t013}
\mathcal{N}\left(\mathcal{I}_i,\mathcal{J}_j\right)=0,\qquad i\neq j.
\end{equation}
It remains to compare the three diagonal concomitants. Set
\[
\Theta_i:=\mathcal N(\mathcal I_i,\mathcal J_i),\qquad i=1,2,3.
\]
For unit vectors $a=(a_1,a_2,a_3)$ and $b=(b_1,b_2,b_3)$ in
$\mathbb R^3$, define
\[
\mathcal I(a):=\sum_{i=1}^3a_i\mathcal I_i,\qquad
\mathcal J(b):=\sum_{i=1}^3b_i\mathcal J_i.
\]
By \eqref{ogcl111001} and \eqref{ogcl111002}, both
$\mathcal I(a)$ and $\mathcal J(b)$ are integrable generalized complex structures. Furthermore, \eqref{coincide} gives
\[
\mathcal I(a)\mathcal J(b)+\mathcal J(b)\mathcal I(a)
=-2(a\cdot b)\mathcal G.
\]
Thus, if $a\cdot b=0$, Lemma~\ref{nea000} yields
\[
0=\mathcal N(\mathcal I(a),\mathcal J(b))
=\sum_{i=1}^3a_i b_i\,\Theta_i,
\]
where \eqref{t013} has been used in the last equality. Taking
\[
a=\frac1{\sqrt2}(1,1,0),\qquad
b=\frac1{\sqrt2}(1,-1,0)
\]
gives $\Theta_1=\Theta_2$, and similarly $\Theta_1=\Theta_3$. Hence
\[
\mathcal N(\mathcal I_1,\mathcal J_1)
=\mathcal N(\mathcal I_2,\mathcal J_2)
=\mathcal N(\mathcal I_3,\mathcal J_3).
\]
Therefore \eqref{ogcl11100} holds.

($\Longleftarrow$) The converse follows immediately from the first line of
\eqref{ogcl11100}, by taking $i=j$.
\qed

{\it A generalized hypercomplex structure} \cite{FinoGrantcharov2025} on $M$ is a triple
$
(\mathcal{I}, \mathcal{J}, \mathcal{K})
$
of generalized complex structures on $\mathbb{T} M$ satisfying the quaternionic algebra:
$
\mathcal{I}^2=\mathcal{J}^2=\mathcal{K}^2=\mathcal{I} \mathcal{J} \mathcal{K}=-\mathrm{Id}.
$
 If each $\mathcal{I}, \mathcal{J}, \mathcal{K}$ is integrable in the generalized sense (i.e., its eigenbundle is Dorfman involutive), then $(M, \mathcal{I}, \mathcal{J}, \mathcal{K})$ is called a generalized hypercomplex manifold. Then by Lemma \ref{nea000}--(1), we have
\begin{prop}
The induced almost generalized complex structure $\mathcal{J}_{i}=\frac{1}{2} \epsilon_{ijk} \mathcal{I}_j \mathcal{I}_k$ in (\ref{coincide00000}) is integrable. Then the generalized Clifford manifold canonically determines a generalized hypercomplex structure $\left(\mathcal{J}_{1},\mathcal{J}_{2},\mathcal{J}_{3}\right)$.
\end{prop}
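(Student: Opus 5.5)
The plan has two parts: first the algebraic facts that make $(\mathcal{J}_1,\mathcal{J}_2,\mathcal{J}_3)$ a quaternionic triple of orthogonal almost generalized complex structures, and then integrability via Lemma~\ref{nea000}--(1).

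\textbf{Algebra.} For $i\neq j$ the Clifford relations \eqref{ogcl110} give $\mathcal{I}_i\mathcal{I}_j=-\mathcal{I}_j\mathcal{I}_i$. Hence $\mathcal{J}_k=\frac12\epsilon_{kij}\mathcal{I}_i\mathcal{I}_j$ reduces to a single product; for instance $\mathcal{J}_1=\mathcal{I}_2\mathcal{I}_3$, and cyclically. Each $\mathcal{J}_k$ is orthogonal, being a product of two elements of $O(n,n)$. Its square is
\[
(\mathcal{I}_2\mathcal{I}_3)^2=-\mathcal{I}_2^2\mathcal{I}_3^2=-\mathrm{Id}.
\]
So each $\mathcal{J}_k$ is an almost generalized complex structure.

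The quaternionic relations are already recorded in \eqref{coincide}: $\mathcal{J}_i\mathcal{J}_j=-\delta_{ij}\mathrm{Id}+\epsilon_{ijk}\mathcal{J}_k$. Consequently $\mathcal{J}_1\mathcal{J}_2\mathcal{J}_3=\mathcal{J}_3\mathcal{J}_3=-\mathrm{Id}$. If one prefers to verify this directly, it follows by moving the $\mathcal{I}$'s past each other using anticommutation, and the sign bookkeeping is routine.

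\textbf{Integrability.} Since $\mathcal{I}_2$ and $\mathcal{I}_3$ are integrable generalized complex structures (by \eqref{ogcl111}) that anticommute, Lemma~\ref{nea000}--(1) gives $N_{\mathcal{I}_2\mathcal{I}_3}=0$, that is, $N_{\mathcal{J}_1}=0$. The same argument applied to the pairs $(\mathcal{I}_3,\mathcal{I}_1)$ and $(\mathcal{I}_1,\mathcal{I}_2)$ gives $N_{\mathcal{J}_2}=N_{\mathcal{J}_3}=0$. This is also contained in the first line of \eqref{ogcl11100} of Theorem~\ref{pullup001} with $i=j$.

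It remains to connect vanishing of the Nijenhuis tensor with the eigenbundle definition of integrability used for hypercomplex structures. This is the standard fact that, for an almost generalized complex structure $\mathcal{J}$, $N_{\mathcal{J}}=0$ is equivalent to Dorfman involutivity of the $+i$-eigenbundle $L$. To see it, for $A,B\in\Gamma(L)$ one computes
\[
N_{\mathcal{J}}(A,B)=-2\bigl([A,B]_D+i\mathcal{J}[A,B]_D\bigr),
\]
which vanishes exactly when $[A,B]_D\in L$.

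Combining the two parts, $(\mathcal{J}_1,\mathcal{J}_2,\mathcal{J}_3)$ is a generalized hypercomplex structure, canonically determined by the Clifford triple. No step here is a real obstacle; the only care needed is the sign bookkeeping in checking $\mathcal{J}_1\mathcal{J}_2\mathcal{J}_3=-\mathrm{Id}$, which is settled by \eqref{coincide}.
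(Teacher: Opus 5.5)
Your proposal is correct and follows the paper's own argument: each $\mathcal{J}_k$ reduces to a product $\mathcal{I}_i\mathcal{I}_j$ of anticommuting integrable generalized complex structures, so Lemma~\ref{nea000}--(1) gives $N_{\mathcal{J}_k}=0$, and the quaternionic relations come from \eqref{coincide}. Your extra checks (orthogonality, $\mathcal{J}_k^2=-\mathrm{Id}$, $\mathcal{J}_1\mathcal{J}_2\mathcal{J}_3=-\mathrm{Id}$, and the link from $N_{\mathcal{J}}=0$ to Dorfman involutivity of the $+i$-eigenbundle) make explicit what the paper leaves implicit.
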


\begin{ex}{\it (Clifford--Hermitian manifolds).}
The rank-$3$ Clifford Hermitian manifold \cite{MRclihma} is a Hermitian manifold with three complex structure ${\rm I}_{a}$ $(a=1,2,3)$ such that
 \begin{itemize}
\item[(1)] $g({\rm I}_{a}X,{\rm I}_{a}Y)=g(X,Y)$, $X,Y\in TM$;
\item[(2)] $I_{a}I_{b}+I_{b}I_{a}=-2\delta_{ab}{\rm Id}.$
\end{itemize}
If the Clifford structures $I_i(i=1,2,3)$ are simultaneously integrable, that is,
$$
\mathcal{N}\left(I_i, I_j\right)=0, \quad i, j=1,2,3,
$$
then the associated generalized complex structures
$$
\mathcal{I}_i=\left(\begin{array}{cc}
I_i & 0 \\
0 & -I_i^*
\end{array}\right), \quad i=1,2,3,
$$
define a generalized Clifford structure on the generalized tangent bundle $\mathbb{T} M=T M \oplus T^* M$.
\end{ex}
\begin{ex}{\it (Generalized hyperk\"ahler manifolds).} Every generalized hyperk\"ahler manifold gives rise to a generalized Clifford structure.
This follows from the fact that the generalized hyperk\"{a}hler manifold admits three generalized K\"{a}hler structures $\left(\mathcal{I}_i,G\right)$ $(i=1,2,3)$ that satisfy the algebra of bi-quaternions $C l_{2,1}(\mathbb{R})$

\begin{equation}\label{rgk}
\begin{array}{ll}
\mathcal{I}_i \mathcal{I}_j=-\delta_{i j} {\rm Id}+\epsilon_{i j k} \mathcal{I}_k, & \tilde{\mathcal{I}}_i {\tilde{\mathcal{I}}_j}=-\delta_{i j}{\rm Id}+\epsilon_{i j k} \mathcal{I}_k, \\
\mathcal{I}_i \tilde{\mathcal{I}}_j=-\delta_{i j} {G}+\epsilon_{i j k} \tilde{\mathcal{I}}_k, & \tilde{\mathcal{I}}_i \mathcal{I}_j=-\delta_{i j} {G}+\epsilon_{i j k} \tilde{\mathcal{I}}_k,
\end{array}
\end{equation}
where  $\tilde{\mathcal{I}}_i=\mathcal{I}_i G$. It is direct to check that the three generalized complex structures $\tilde{\mathcal{I}}_i\ (i=1,2,3)$
form a generalized Clifford structure on $\mathbb{T} M$ .
\end{ex}

Obviously, by (\ref{coincide}) and (\ref{rgk}), we have
\begin{prop}
The generalized Clifford manifold $\left(M,\mathcal{I}_1,\mathcal{I}_2,\mathcal{I}_3\right)$ induces a generalized hyperk\"{a}hler structure when $\mathcal{G}=-\mathcal{I}_1 \mathcal{I}_2 \mathcal{I}_3 $ is a generalized metric.
\end{prop}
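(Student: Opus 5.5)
The statement to prove is the last proposition: if $\mathcal G=-\mathcal I_1\mathcal I_2\mathcal I_3$ is a generalized metric, then $(M,\mathcal I_1,\mathcal I_2,\mathcal I_3)$ induces a generalized hyperk\"ahler structure. The plan is to realize the structure as in the Generalized hyperk\"ahler example, with $G:=\mathcal G$ and the three K\"ahler-type generalized complex structures taken to be the induced $\mathcal J_i$. The triple $\mathcal I_i$ should play the role of $\tilde{\mathcal I}_i$.

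\textbf{Step 1: algebraic relations.} By \eqref{coincide}, the $\mathcal J_i$ satisfy $\mathcal J_i\mathcal J_j=-\delta_{ij}\mathrm{Id}+\epsilon_{ijk}\mathcal J_k$. Moreover, $\mathcal I_i=-\mathcal J_i^{-1}\mathcal G\cdot(-1)$. To see this, use $\mathcal G=-\mathcal I_i\mathcal J_i$; since $\mathcal I_i^2=-\mathrm{Id}$, this gives $\mathcal I_i\mathcal G=\mathcal J_i$, hence $\mathcal J_i\mathcal G=\mathcal I_i\mathcal G^2=\mathcal I_i$. So $\tilde{\mathcal J}_i:=\mathcal J_i\mathcal G=\mathcal I_i$, and the relations \eqref{rgk} for the pair $(\mathcal J_i,\tilde{\mathcal J}_i)$ are exactly the identities \eqref{coincide}, with $G=\mathcal G$.

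\textbf{Step 2: commutation and compatibility.} I would show that $\mathcal G$ commutes with every $\mathcal I_i$ and $\mathcal J_i$. Moving $\mathcal I_i$ through $\mathcal I_1\mathcal I_2\mathcal I_3$ produces two sign changes (one factor equals $\mathcal I_i$, two anticommute), so the signs cancel. The $\mathcal J_i$, being products of two $\mathcal I$'s, also commute with $\mathcal G$. Each $\mathcal J_i$ is orthogonal as a product of orthogonal maps. Therefore $\langle \mathcal G\mathcal J_iA,\mathcal J_iB\rangle=\langle\mathcal J_i\mathcal G A,\mathcal J_i B\rangle=\langle\mathcal G A,B\rangle$, so $\mathcal J_i$ is compatible with the generalized metric $\mathcal G$. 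Since $\mathcal J_i\mathcal G=\mathcal I_i$ is again a generalized complex structure commuting with $\mathcal J_i$, the pair $(\mathcal J_i,\mathcal I_i)$ has the standard generalized K\"ahler form $(\mathcal J,\mathcal J\mathcal G)$.

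\textbf{Step 3: integrability.} Each $\mathcal I_i$ is integrable by Definition \ref{gcmfd}. Each $\mathcal J_i$ is integrable by Theorem \ref{pullup001} and the subsequent proposition. Hence both structures of each generalized K\"ahler pair are integrable, which yields three generalized K\"ahler structures $(\mathcal J_i,\mathcal G)$ obeying the bi-quaternion algebra \eqref{rgk}. This is Bredthauer's definition of a generalized hyperk\"ahler structure.

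\textbf{Main obstacle.} The main obstacle is matching conventions, in particular making sure the definition in force requires only the integrability of $\mathcal J_i$ and of $\mathcal J_i\mathcal G$, both of which we have. If positivity is also needed to ensure $\mathcal G$ is a genuine generalized metric, that is exactly the hypothesis of the proposition, so no further check should be required.
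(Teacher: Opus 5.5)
Your proposal is correct and follows the paper's route. The paper's proof consists only of the remark that \eqref{coincide} matches \eqref{rgk} under $G=\mathcal G$, with $\mathcal J_i$ as the K\"ahler triple and $\mathcal I_i$ in the role of $\tilde{\mathcal I}_i$; you additionally supply the commutation, compatibility and integrability checks that the paper leaves implicit. One small correction: the stray formula $\mathcal I_i=-\mathcal J_i^{-1}\mathcal G\cdot(-1)$ actually evaluates to $-\mathcal J_i\mathcal G$ and should be deleted, whereas your subsequent derivation $\mathcal J_i\mathcal G=\mathcal I_i$ is the correct identity.
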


\begin{ex}{\it(Hyperk\"ahler manifolds).} Every hyperk\"ahler manifold yields a generalized Clifford structure in the generalized sense. In fact, take
$$\mathcal{I}_{i}:=\left(\begin{array}{cc}
0 & -\omega_{I_{i}}^{-1} \\
\omega_{I_{i}} & 0
\end{array}\right),\quad i=1,2,3.$$
It is direct to check that $\mathcal{I}_{i}\ (i=1,2,3)$ satisfies identity \eqref{ogcl110}-\eqref{ogcl111}, i.e., $\left(\mathcal{I}_{1},\mathcal{I}_{2},\mathcal{I}_{3}\right)$ is a generalized Clifford structure.
\end{ex}
\begin{rem} In the definition of a hyperk\"{a}hler manifold with hypercomplex structure  $I_1, I_2, I_3$, one does not need to impose the mixed Nijenhuis condition $N(I_i, I_j)=0\ (i\neq j)$ as an additional axiom. Indeed, for a hyperk\"{a}hler structure the complex structures $I_1, I_2, I_3$ are parallel with respect to the Levi-Civita connection, hence each of them is integrable. As a consequence, any mixed Nijenhuis concomitant vanishes automatically. Therefore the condition $N(I_i, I_j)=0\ (i\neq j)$ is redundant in the hyperk\"{a}hler setting.
\end{rem}

\section{The twistor space of a generalized Clifford manifold}

Take
\begin{equation}\label{56700}\mathcal{I}^{\pm}_i:=\frac{1}{2}\left(\mathcal{J}_i\pm\mathcal{I}_i\right)\ (i=1,2,3) \ \text{and}\ \mathcal{G}^{ \pm}:=\frac{1}{2}({\rm Id} \pm \mathcal{G}).\end{equation}
  It is direct to check that
\begin{lem}The endomorphisms $\mathcal G^{\pm}$ and $\mathcal I_i^{\pm}$ satisfy the following projection and quaternionic-type relations:
 \begin{equation}\label{567}\begin{split}
&(1)\ \left(\mathcal{G}^{ \pm}\right)^{2}=\mathcal{G}^{ \pm};\\
&(2)\ \mathcal{I}^{\pm}_i\mathcal{I}^{\pm}_j=-\delta_{ij}\mathcal{G}^{ \pm}+\varepsilon_{ijk}\mathcal{I}^{\pm}_k;\\
&(3)\  \mathcal{G}^{ \pm} \mathcal{G}^{ \mp}=\mathcal{G}^{ \pm} \mathcal{I}^{\mp}_i=\mathcal{I}^{ \pm}_i \mathcal{G}^{\mp}=\mathcal{I}^{\pm}_i\mathcal{I}^{\mp}_j=0.
\end{split}
 \end{equation}
\end{lem}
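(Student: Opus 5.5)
The identities are purely algebraic consequences of \eqref{coincide} and $\mathcal G^2=\mathrm{Id}$, so the plan is to expand each product bilinearly and simplify with the multiplication table. First I record the facts needed from \eqref{coincide}. Taking $i=j$ in the mixed rows gives $\mathcal I_i\mathcal J_i=\mathcal J_i\mathcal I_i=-\mathcal G$. Next, $\mathcal G$ commutes with every $\mathcal I_i$ and $\mathcal J_i$, and it exchanges the two families: $\mathcal G\mathcal I_i=\mathcal J_i$ and $\mathcal G\mathcal J_i=\mathcal I_i$. I will obtain these by multiplying $\mathcal I_i\mathcal J_i=-\mathcal G$ by $\mathcal I_i$ on the left and by $\mathcal J_i$ on the right, using $\mathcal I_i^2=\mathcal J_i^2=-\mathrm{Id}$. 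Alternatively, $\mathcal G=-\mathcal I_1\mathcal I_2\mathcal I_3$ commutes with each $\mathcal I_i$, because moving $\mathcal I_i$ through the three factors produces two sign changes.

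For (1), I expand $\tfrac14(\mathrm{Id}\pm\mathcal G)^2=\tfrac14(2\,\mathrm{Id}\pm2\mathcal G)=\mathcal G^\pm$. For the first product in (3), $\mathcal G^+\mathcal G^-=\tfrac14(\mathrm{Id}-\mathcal G^2)=0$.

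The key observation for the remaining identities is that $\mathcal I_i^\pm=\tfrac12(\mathcal G\mathcal I_i\pm\mathcal I_i)=\pm\,\mathcal G^\pm\mathcal I_i$. Using commutativity this equals $\pm\,\mathcal I_i\mathcal G^\pm$, and also $\mathcal G^\pm\mathcal I_i^\pm=\mathcal I_i^\pm$. With this, (3) follows from $\mathcal G^\pm\mathcal G^\mp=0$. For instance, $\mathcal G^\pm\mathcal I^\mp_i=\mp\,\mathcal G^\pm\mathcal G^\mp\mathcal I_i=0$, and $\mathcal I^\pm_i\mathcal I^\mp_j=-\,\mathcal I_i\mathcal G^\pm\mathcal G^\mp\mathcal I_j=0$.

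For (2), I compute $\mathcal I^\pm_i\mathcal I^\pm_j=\mathcal G^\pm\mathcal I_i\mathcal I_j$, using $(\mathcal G^\pm)^2=\mathcal G^\pm$ and the sign squaring to $+1$. Then
\[
\mathcal G^\pm\mathcal I_i\mathcal I_j=\mathcal G^\pm(-\delta_{ij}\mathrm{Id}+\epsilon_{ijk}\mathcal J_k)=-\delta_{ij}\mathcal G^\pm+\epsilon_{ijk}\mathcal G^\pm\mathcal J_k .
\]
Finally, $\mathcal G^\pm\mathcal J_k=\tfrac12(\mathcal J_k\pm\mathcal I_k)=\mathcal I_k^\pm$, since $\mathcal G\mathcal J_k=\mathcal I_k$.

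The only step requiring any care is establishing $\mathcal G\mathcal I_i=\mathcal J_i$, $\mathcal G\mathcal J_i=\mathcal I_i$ and the commutation of $\mathcal G$ with each $\mathcal I_i$ and $\mathcal J_i$. Everything else is immediate bookkeeping, so I expect no real obstacle; I only need to be careful with the sign conventions in \eqref{coincide00000}.
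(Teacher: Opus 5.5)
Your proof is correct, but it is organized differently from the paper's. The paper proves (2) by expanding $\tfrac14(\mathcal J_i\pm\mathcal I_i)(\mathcal J_j\pm\mathcal I_j)$ into four products and substituting all four rows of \eqref{coincide}. It then dismisses (3) with ``similarly'', meaning the analogous four-term expansions with opposite signs.

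You instead first isolate $\mathcal G\mathcal I_i=\mathcal I_i\mathcal G=\mathcal J_i$ and $\mathcal G\mathcal J_i=\mathcal J_i\mathcal G=\mathcal I_i$. Your one-line derivation needs both $\mathcal I_i\mathcal J_i=-\mathcal G$ and $\mathcal J_i\mathcal I_i=-\mathcal G$, or the direct commutation argument for $\mathcal G=-\mathcal I_1\mathcal I_2\mathcal I_3$; you have both available. You then factor $\mathcal I_i^\pm=\pm\mathcal G^\pm\mathcal I_i=\pm\mathcal I_i\mathcal G^\pm$. After that, every identity in (3) reduces to $\mathcal G^+\mathcal G^-=0$. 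Identity (2) needs only the row $\mathcal I_i\mathcal I_j=-\delta_{ij}\mathrm{Id}+\epsilon_{ijk}\mathcal J_k$ together with $\mathcal G^\pm\mathcal J_k=\mathcal I_k^\pm$.

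The paper's route is a purely mechanical check against the stated multiplication table, with no auxiliary identities. Yours costs one preliminary step, but it actually proves (3) rather than appealing to analogy. It also shows what the lemma means: $\mathcal I_i^\pm$ vanishes on $\mathbb T_\mp$ and equals $\pm\mathcal I_i$ on $\mathbb T_\pm$. This is exactly what the subsequent Corollary and Remark use when they restrict the triples to the eigenbundles. It also explains, for instance, the sign in $\mathcal I_1^-=0\oplus(-\mathcal J_I)$ in the $N\times N$ example.
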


\begin{proof}
(1)
Using $\mathcal{G}^2={\rm Id}$ we compute
\[
(\mathcal{G}^\pm)^2=\frac14({\rm Id}\pm \mathcal{G})^2
=\frac14({\rm Id}\pm 2\mathcal{G}+\mathcal{G}^2)
=\frac14(2{\rm Id}\pm 2\mathcal{G})
=\frac12({\rm Id}\pm \mathcal{G})
=\mathcal{G}^\pm.
\]
(2) By \eqref{coincide} and \eqref{56700},
 \begin{equation}\begin{split}
 \mathcal{I}^{\pm}_i\mathcal{I}^{\pm}_j
=& \frac{1}{4}\left(\mathcal{J}_i \pm \mathcal{I}_i\right)\left(\mathcal{J}_j \pm\mathcal{I}_j\right)\\
 =& \frac{1}{4}\left(\mathcal{J}_i\mathcal{J}_j\pm \mathcal{J}_i\mathcal{I}_j \pm\mathcal{I}_i\mathcal{J}_j +\mathcal{I}_i\mathcal{I}_j\right)\\
  =&\frac{1}{4}\left[-\delta_{i j} {\rm Id} +\epsilon_{i j k} \mathcal{J}_k\pm (-\delta_{i j} \mathcal{G}+\epsilon_{i j k} \mathcal{I}_k) \right.\\
 &\left.\pm(-\delta_{i j} \mathcal{G}+\epsilon_{i j k} \mathcal{I}_k) +(-\delta_{i j} {\rm Id}+\epsilon_{i j k} \mathcal{J}_k)\right]\\
  =& \frac{1}{2}\left[-\delta_{i j} \left({\rm Id\pm \mathcal{G}}\right)+\epsilon_{i j k} \left(\mathcal{J}_k\pm\mathcal{I}_k\right)\right]\\
  =& -\delta_{ij}\mathcal{G}^{ \pm}+\varepsilon_{ijk}\mathcal{I}^{\pm}_k.
  \end{split}\end{equation}
  (3) Similarly, (3) holds by \eqref{coincide} and \eqref{56700}.
\end{proof}

\begin{cor}
Let $M$ be a generalized Clifford manifold. Since $\mathcal G^2=\mathrm{Id}$, one has the algebraic splitting
\begin{equation}
\mathbb{T} M=\mathbb{T}_{-1}\oplus\mathbb{T}_{1},
\end{equation}
where $\mathbb{T}_{-1}=\{A\in\mathbb{T}M\mid \mathcal GA=-A\}$ and
$\mathbb{T}_{1}=\{A\in\mathbb{T}M\mid \mathcal GA=A\}$.
Moreover, $\{\mathcal I^-_1,\mathcal I^-_2,\mathcal I^-_3\}$ and
$\{\mathcal I^+_1,\mathcal I^+_2,\mathcal I^+_3\}$ satisfy the quaternionic relations on
$\mathbb T_{-1}$ and $\mathbb T_1$, respectively. If $\mathcal G$ is integrable, then both eigenbundles are involutive under the Dorfman bracket.
\end{cor}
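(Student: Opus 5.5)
The statement to prove is the Corollary. Its three claims are: the splitting $\mathbb{T}M=\mathbb{T}_{-1}\oplus\mathbb{T}_1$; the quaternionic relations for $\mathcal I^\pm_i$ on the two eigenbundles; and involutivity of the eigenbundles when $\mathcal G$ is integrable. The plan is to read all three off the projector relations \eqref{567}, together with the definition of integrability of a generalized real structure.

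\textbf{Step 1: the splitting.} Since $\mathcal G^2=\mathrm{Id}$, part (1) of \eqref{567} shows that $\mathcal G^\pm=\frac12(\mathrm{Id}\pm\mathcal G)$ are idempotents. From the definitions, $\mathcal G^++\mathcal G^-=\mathrm{Id}$, and part (3) gives $\mathcal G^+\mathcal G^-=\mathcal G^-\mathcal G^+=0$. So every $A$ decomposes as $A=\mathcal G^+A+\mathcal G^-A$. We have $\mathcal G\mathcal G^\pm=\pm\mathcal G^\pm$, and $A\in\mathbb T_1\cap\mathbb T_{-1}$ forces $A=-A=0$. Hence
\[
\mathbb T_1=\operatorname{im}\mathcal G^+,\qquad \mathbb T_{-1}=\operatorname{im}\mathcal G^-,\qquad \mathbb{T}M=\mathbb T_{-1}\oplus\mathbb T_1 .
\]

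\textbf{Step 2: the quaternionic relations.} First we show each $\mathcal I^\pm_i$ preserves the matching eigenbundle and vanishes on the opposite one. By \eqref{coincide}, $\mathcal G$ commutes with every $\mathcal I_i$ and $\mathcal J_i$: for instance $\mathcal G=-\mathcal I_1\mathcal I_2\mathcal I_3$, and each $\mathcal I_i$ anticommutes with the other two factors, so moving it through the product gives two sign changes. Therefore $\mathcal I^\pm_i$ commutes with $\mathcal G^\pm$.

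Part (3) of \eqref{567} gives $\mathcal I^\pm_i\mathcal G^\mp=0$, so $\mathcal I^\pm_i$ vanishes on $\mathbb T_{\mp1}$. Combining this with $\mathcal G^++\mathcal G^-=\mathrm{Id}$ yields $\mathcal I^\pm_i=\mathcal I^\pm_i\mathcal G^\pm=\mathcal G^\pm\mathcal I^\pm_i$. Hence $\mathcal I^\pm_i$ maps $\mathbb T_{\pm1}$ into itself.

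Restricting part (2) of \eqref{567} to $\mathbb T_{\pm1}$, where $\mathcal G^\pm$ acts as the identity, gives
\[
\mathcal I^\pm_i\mathcal I^\pm_j\big|_{\mathbb T_{\pm1}}=-\delta_{ij}\mathrm{Id}+\varepsilon_{ijk}\mathcal I^\pm_k\big|_{\mathbb T_{\pm1}} .
\]
These are exactly the quaternionic relations, including $(\mathcal I^\pm_1)(\mathcal I^\pm_2)(\mathcal I^\pm_3)=-\mathrm{Id}$ on $\mathbb T_{\pm1}$.

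\textbf{Step 3: involutivity.} The definition of a generalized real structure requires $\operatorname{Ker}(\mathcal G\pm\mathrm{Id})$ to be Dorfman-closed. These kernels are exactly $\mathbb T_{\mp1}$, so the last claim is immediate from the definition. Equivalently, by the lemma preceding it, it follows from $N_{\mathcal G}=0$.

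The only step needing care is Step 2: the relations must be interpreted on the subbundles, with $\mathcal G^\pm$ playing the role of the identity, and for this one needs $\mathcal I^\pm_i$ to commute with $\mathcal G$. That commutation is the key input, and it comes directly from the Clifford relations.
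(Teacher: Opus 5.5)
Your proposal is correct and takes essentially the same route as the paper, which gives no separate proof: the splitting and the quaternionic relations are read off from the projector identities \eqref{567}, with $\mathcal G^\pm$ acting as the identity on $\mathbb T_{\pm1}$ as in the subsequent remark, and involutivity is exactly the definition of an integrable generalized real structure. Your explicit check that $\mathcal I^\pm_i=\mathcal G^\pm\mathcal I^\pm_i=\mathcal I^\pm_i\mathcal G^\pm$, which makes the restrictions to $\mathbb T_{\pm1}$ well defined, fills in a step the paper leaves implicit.
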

\begin{rem}
Recall that a generalized hypercomplex structure on a Courant algebroid consists
of three integrable generalized complex structures $(\mathcal J_1,\mathcal J_2,\mathcal J_3)$ satisfying the
quaternionic relations \cite{FinoGrantcharov2025}.
By contrast, the endomorphisms
\(\mathcal I_i^\pm\) are not generalized complex structures on all of
$\mathbb TM$, since $(\mathcal I_i^\pm)^2=-\mathcal G^\pm$
rather than \(-\operatorname{Id}\). However, on the eigenbundles
$\mathbb T_\pm=\ker(\mathcal G\mp\operatorname{Id}),$
the projections \(\mathcal G^\pm\) restrict to the identity, and hence
\[
\mathcal I_i^\pm\mathcal I_j^\pm
=
-\delta_{ij}\operatorname{Id}_{\mathbb T_\pm}
+\varepsilon_{ijk}\mathcal I_k^\pm.
\]
Thus the restrictions
$\left(
\mathcal I_1^\pm|_{\mathbb T_\pm},
\mathcal I_2^\pm|_{\mathbb T_\pm},
\mathcal I_3^\pm|_{\mathbb T_\pm}
\right)$
define quaternionic triples on the two eigenbundles.

If \(\mathcal G\) is integrable, then \(\mathbb T_\pm\) are involutive under the
Dorfman bracket, and the restricted triples remain quaternionic triples on the corresponding
Dorfman-involutive sectors. Without the integrability of \(\mathcal G\), they should be
regarded as algebraic generalized hypercomplex sectors.

This decomposition is analogous to the positive and negative
hypercomplex sectors appearing in generalized hyperk\"ahler and
bi-HKT geometry. The present construction is, however, more general,
since \(\mathcal G\) is not assumed to be a positive generalized metric.
\end{rem}
\begin{ex}
Let \(N\) be a hyperk\"ahler manifold endowed with hypercomplex structures
\(I,J,K\), and let
\begin{equation}\label{bcn001}
\mathcal J_I=
\begin{pmatrix}
I&0\\
0&-I^*
\end{pmatrix},
\qquad
\mathcal J_J=
\begin{pmatrix}
J&0\\
0&-J^*
\end{pmatrix},
\qquad
\mathcal J_K=
\begin{pmatrix}
K&0\\
0&-K^*
\end{pmatrix}
\end{equation}
be the associated generalized complex structures on $\mathbb TN=TN\oplus T^*N.$
On \(M=N\times N\), under the natural identification $\mathbb TM\cong\mathbb TN\oplus\mathbb TN,$
define
\begin{equation}\label{bcn002}
\mathcal I_1:=\mathcal J_I\oplus\mathcal J_I,
\qquad
\mathcal I_2:=\mathcal J_J\oplus\mathcal J_J,
\qquad
\mathcal I_3:=\mathcal J_K\oplus(-\mathcal J_K).
\end{equation}
Each \(\mathcal I_a\) is an integrable generalized complex structure,
and
\[
\mathcal I_a\mathcal I_b+\mathcal I_b\mathcal I_a
=-2\delta_{ab}\operatorname{Id}.
\]
Therefore, $(\mathcal I_1,\mathcal I_2,\mathcal I_3)$
defines a rank-\(3\) generalized Clifford structure. Moreover,
\[
\mathcal I_3\neq\mathcal I_1\mathcal I_2,
\]
so this example is not merely a generalized hypercomplex triple
written in Clifford notation.

We now describe explicitly the decomposition into the positive and
negative sectors. It follows from the definition of \(\mathcal J_a\)
in \eqref{coincide00000}, together with
\eqref{bcn001}--\eqref{bcn002}, that
\begin{equation}\label{coincide00000001}
\mathcal J_1
=\mathcal J_I\oplus(-\mathcal J_I),
\qquad
\mathcal J_2
=\mathcal J_J\oplus(-\mathcal J_J),
\qquad
\mathcal J_3
=\mathcal J_K\oplus\mathcal J_K.
\end{equation}
Consequently,
\[
\mathcal G=-\mathcal I_1\mathcal I_2\mathcal I_3
=\operatorname{Id}_{\mathbb TN}
 \oplus
 \bigl(-\operatorname{Id}_{\mathbb TN}\bigr).
\]
Hence the \(+1\) and \(-1\) eigenbundles of \(\mathcal G\) are
\[
\mathbb T_+
=\ker(\mathcal G-\operatorname{Id})
=\mathbb TN\oplus0,
\qquad
\mathbb T_-
=\ker(\mathcal G+\operatorname{Id})
=0\oplus\mathbb TN.
\]
In particular, both sectors are nonzero. The corresponding projections
are
\[
\mathcal G^+
=\frac12(\operatorname{Id}+\mathcal G)
=\operatorname{Id}_{\mathbb TN}\oplus0,
\qquad
\mathcal G^-
=\frac12(\operatorname{Id}-\mathcal G)
=0\oplus\operatorname{Id}_{\mathbb TN}.
\]
Using $\mathcal I_i^\pm=\frac12(\mathcal J_i\pm\mathcal I_i),$
we obtain the positive sector
\[
\mathcal I_1^+=\mathcal J_I\oplus0,
\qquad
\mathcal I_2^+=\mathcal J_J\oplus0,
\qquad
\mathcal I_3^+=\mathcal J_K\oplus0,
\]
and the negative sector
\[
\mathcal I_1^-=0\oplus(-\mathcal J_I),
\qquad
\mathcal I_2^-=0\oplus(-\mathcal J_J),
\qquad
\mathcal I_3^-=0\oplus\mathcal J_K.
\]
Thus $(\mathcal I_1^+,\mathcal I_2^+,\mathcal I_3^+)$ and $(\mathcal I_1^-,\mathcal I_2^-,\mathcal I_3^-)$
satisfy the quaternionic relations on
\(\mathbb T_+\) and \(\mathbb T_-\), respectively. This example
therefore exhibits two nontrivial quaternionic sectors, which may be
rotated independently in the subsequent \(S^2\times S^2\)-twistor
construction.
Moreover, this example admits a natural \(B\)-field deformation. For
any \(B\in\Omega^2(M)\), set \(H=dB\) and define
\[
\widetilde{\mathcal I}_a
=e^B\mathcal I_a e^{-B},
\qquad a=1,2,3.
\]
Then each \(\widetilde{\mathcal I}_a\) is integrable with respect to
the \(H\)-twisted Dorfman bracket, and
\[
\widetilde{\mathcal I}_a\widetilde{\mathcal I}_b
+
\widetilde{\mathcal I}_b\widetilde{\mathcal I}_a
=
-2\delta_{ab}\operatorname{Id}.
\]
Furthermore,
\[
\widetilde{\mathcal G}=e^B\mathcal G e^{-B},
\qquad
\widetilde{\mathcal G}^{\pm}=e^B\mathcal G^\pm e^{-B},
\qquad
\widetilde{\mathcal I}_a^\pm
=e^B\mathcal I_a^\pm e^{-B}.
\]
Hence the decomposition into the positive and negative sectors is
preserved under \(B\)-field transformations.
\end{ex}

We now apply the above decomposition to construct the twistor space of a generalized Clifford manifold.
Let
\[
\zeta_1 \longmapsto \frac{1}{1+|\zeta_1|^2}
\bigl(1-|\zeta_1|^2,\,-\mathbf{i}(\zeta_1-\bar{\zeta}_1),\,-(\zeta_1+\bar{\zeta}_1)\bigr)
\]
be the stereographic parametrization of the unit sphere \(S^2\), where \(\zeta_1\) denotes the standard complex coordinate on \(\mathbb{CP}^1\cong S^2\). Define
\begin{equation}\label{2201}
\begin{split}
T(\zeta_1)
:=
\frac{1}{1+|\zeta_1|^2}
\begin{pmatrix}
1-|\zeta_1|^2 & -\mathbf{i}(\zeta_1-\bar{\zeta}_1) & -(\zeta_1+\bar{\zeta}_1)\\
\mathbf{i}(\zeta_1-\bar{\zeta}_1) & 1+\frac{1}{2}\bigl(\zeta_1^2+\bar{\zeta}_1^2\bigr) & -\frac{\mathbf{i}}{2}\bigl(\zeta_1^2-\bar{\zeta}_1^2\bigr)\\
\zeta_1+\bar{\zeta}_1 & -\frac{\mathbf{i}}{2}\bigl(\zeta_1^2-\bar{\zeta}_1^2\bigr) & 1-\frac{1}{2}\bigl(\zeta_1^2+\bar{\zeta}_1^2\bigr)
\end{pmatrix}.
\end{split}
\end{equation}
Write \(t_{ij}\) for the entry in the \(i\)-th row and \(j\)-th column of \(T(\zeta_1)\). Then \(T(\zeta_1)\in \mathrm{SO}(3)\), and its first row is the unit vector of \(S^2\) determined by the stereographic coordinate \(\zeta_1\).

Similarly, let
\[
\zeta_2 \longmapsto \frac{1}{1+|\zeta_2|^2}
\bigl(1-|\zeta_2|^2,\,-\mathbf{i}(\zeta_2-\bar{\zeta}_2),\,-(\zeta_2+\bar{\zeta}_2)\bigr)
\]
be the stereographic parametrization of another copy of the unit sphere \(S^2\), where \(\zeta_2\) denotes the standard complex coordinate on \(\mathbb{CP}^1\cong S^2\). Define
\begin{equation}\label{2202}
\begin{split}
S(\zeta_2)
:=
\frac{1}{1+|\zeta_2|^2}
\begin{pmatrix}
1-|\zeta_2|^2 & -\mathbf{i}(\zeta_2-\bar{\zeta}_2) & -(\zeta_2+\bar{\zeta}_2)\\
\mathbf{i}(\zeta_2-\bar{\zeta}_2) & 1+\frac{1}{2}\bigl(\zeta_2^2+\bar{\zeta}_2^2\bigr) & -\frac{\mathbf{i}}{2}\bigl(\zeta_2^2-\bar{\zeta}_2^2\bigr)\\
\zeta_2+\bar{\zeta}_2 & -\frac{\mathbf{i}}{2}\bigl(\zeta_2^2-\bar{\zeta}_2^2\bigr) & 1-\frac{1}{2}\bigl(\zeta_2^2+\bar{\zeta}_2^2\bigr)
\end{pmatrix}.
\end{split}
\end{equation}
For convenience, we denote by \(s_{ij}\) the \((i,j)\)-entry of \(S(\zeta_2)\). Then \(S(\zeta_2)\in \mathrm{SO}(3)\), and its first row is the unit vector of \(S^2\) determined by the stereographic coordinate \(\zeta_2\).
Thus the pair \((\zeta_1,\zeta_2)\in \mathbb{CP}^1\times \mathbb{CP}^1\cong S^2\times S^2\) determines two unit vectors represented locally by the first rows of \(T(\zeta_1)\) and \(S(\zeta_2)\), respectively. The resulting first rotated generalized complex structure depends only on these two unit vectors and is therefore well defined on \(S^2\times S^2\).

Take
\begin{equation}\label{22601}\begin{split}&\hat{\mathcal{I}}^{+}_i=t_{i1}\mathcal{I}^{+}_1+t_{i2}\mathcal{I}^{+}_2+t_{i3}\mathcal{I}^{+}_3,\ (i=1,2,3),\\
&\hat{\mathcal{I}}^{-}_i=s_{i1}\mathcal{I}^{-}_1+s_{i2}\mathcal{I}^{-}_2+s_{i3}\mathcal{I}^{-}_3,\ (i=1,2,3).\end{split}\end{equation}
It is direct to check that
\begin{lem}\label{wke11}The quadruple $\left(\hat{\mathcal{I}}^{\pm}_1,\hat{\mathcal{I}}^{\pm}_2,\hat{\mathcal{I}}^{\pm}_3 ,\mathcal{G}^{ \pm}\right)$ satisfies the algebra of quaternion respectively, i.e.
 \begin{itemize}
\item[(1)] $\mathcal{\hat{I}}^{\pm}_i\mathcal{\hat{I}}^{\pm}_j=-\delta_{ij}\mathcal{G}^{ \pm}+\varepsilon_{ijk}\mathcal{\hat{I}}^{\pm}_k$;
\item[(2)] $ \mathcal{G}^{ \pm} \mathcal{G}^{ \mp}=\mathcal{G}^{ \pm} \mathcal{\hat{I}}^{\mp}_i=\mathcal{\hat{I}}^{ \pm}_i \mathcal{G}^{\mp}=\mathcal{\hat{I}}^{\pm}_i\mathcal{\hat{I}}^{\mp}_j=0$.
\end{itemize}
\end{lem}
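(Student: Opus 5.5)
This follows from the quaternionic relations \eqref{567} of the preceding lemma, together with the fact that $T(\zeta_1)$ and $S(\zeta_2)$ lie in $\mathrm{SO}(3)$. I would first record the standard algebraic facts about a real rotation matrix $R=(r_{ij})\in\mathrm{SO}(3)$:
\[
\sum_{k} r_{ik}r_{jk}=\delta_{ij},\qquad \varepsilon_{abc}\,r_{ia}r_{jb}=\varepsilon_{ijk}\,r_{kc}.
\]
The second identity expresses that the rows of $R$ form a positively oriented orthonormal frame, i.e. $r_{i\cdot}\times r_{j\cdot}=\varepsilon_{ijk}r_{k\cdot}$, and it uses $\det R=1$. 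Although the entries of \eqref{2201}--\eqref{2202} are written in terms of $\zeta_1,\bar\zeta_1$, they are real. Orthonormality of the rows and $\det=1$ can be checked directly, or read off from the stated membership $T(\zeta_1),S(\zeta_2)\in\mathrm{SO}(3)$, which I take as given.

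For (1), fix a sign, say $+$, and expand by bilinearity:
\[
\hat{\mathcal I}^+_i\hat{\mathcal I}^+_j=\sum_{a,b}t_{ia}t_{jb}\,\mathcal I^+_a\mathcal I^+_b
=\sum_{a,b}t_{ia}t_{jb}\bigl(-\delta_{ab}\mathcal G^++\varepsilon_{abc}\mathcal I^+_c\bigr),
\]
using \eqref{567}(2). The first sum equals $-(\sum_a t_{ia}t_{ja})\mathcal G^+=-\delta_{ij}\mathcal G^+$ by row orthonormality. The second equals $\sum_c\varepsilon_{ijk}t_{kc}\mathcal I^+_c=\varepsilon_{ijk}\hat{\mathcal I}^+_k$ by the cross-product identity. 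The $-$ case is identical with $S(\zeta_2)$ in place of $T(\zeta_1)$. The two rotations are independent, and this causes no problem because the two sectors never mix in this computation.

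For (2), $\mathcal G^\pm\mathcal G^\mp=0$ is unchanged from \eqref{567}(3). Each remaining product is a linear combination of $\mathcal G^\pm\mathcal I^\mp_a$, $\mathcal I^\pm_a\mathcal G^\mp$, or $\mathcal I^\pm_a\mathcal I^\mp_b$, all of which vanish by \eqref{567}(3). No property of $T$ or $S$ is needed here.

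The only step requiring care is the cross-product identity $\varepsilon_{abc}r_{ia}r_{jb}=\varepsilon_{ijk}r_{kc}$. It fails for matrices in $O(3)$ with determinant $-1$, so I would verify $\det T(\zeta_1)=1$ explicitly, for instance at $\zeta_1=0$ where $T=\mathrm{Id}$, and then conclude by continuity on the connected chart. Everything else is routine bilinear bookkeeping.
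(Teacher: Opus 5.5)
Your proposal is correct and follows essentially the same route as the paper. The paper also expands $\hat{\mathcal I}^\pm_i\hat{\mathcal I}^\pm_j$ bilinearly, applies \eqref{567}(2), and collapses the sums using row orthonormality together with the cross-product relations $\tau_i\times\tau_j=\varepsilon_{ijk}\tau_k$ for the rows of $T$ (resp.\ $S$); part (2) is read off from \eqref{567}(3). Your explicit check that this identity needs $\det T=1$, verified at $\zeta_1=0$ and then by connectedness, simply makes precise the paper's ``direct to check'' step \eqref{clock}.
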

\begin{proof}
(1) Denote the $i$-th row vector of matrix $T$ as
$$\tau_{i}=\left(t_{i1},t_{i2},t_{i3}\right),\quad i=1,2,3.$$
It is direct to check that their vector product satisfy
\begin{equation}\label{clock}
\tau_{1}=\tau_{2}\times\tau_{3},\ \tau_{2}=\tau_{3}\times\tau_{1},\ \tau_{3}=\tau_{1}\times\tau_{2},
\end{equation}
i.e.
\begin{equation}\label{clock2}
t_{1j}=t_{2r}t_{3s}\varepsilon_{rsj},\ t_{2j}=t_{3r}t_{1s}\varepsilon_{rsj},\ t_{3j}=t_{1r}t_{2s}\varepsilon_{rsj},\ j=1,2,3,
\end{equation}
where $\varepsilon_{i j k}$ is the Levi-Civita symbol
$$
\varepsilon_{i j k}=\left\{\begin{aligned}
+1, &\quad \text { if }(i, j, k) \text { is an even permutation of }(1,2,3), \\
-1, &\quad \text { if }(i, j, k) \text { is an odd permutation of }(1,2,3), \\
0, &\quad \text { if any two indices are equal.}
\end{aligned}\right.
$$
By the orthogonal property of $T$ and (\ref{clock2}), we have
\begin{equation}\begin{split}
\mathcal{\hat{I}}^{+}_i\mathcal{\hat{I}}^{+}_j
&=(\sum_{k=1}^{3}t_{ik}\mathcal{I}^{+}_k)(\sum_{l=1}^{3}t_{jl}\mathcal{I}^{+}_l)\\
&=\sum_{k,l=1}^{3}t_{ik}t_{jl}\mathcal{I}^{+}_k\mathcal{I}^{+}_l\\
&=\sum_{k,l=1}^{3}t_{ik}t_{jl}\left(-\delta_{kl}\mathcal{G}^{+}+\varepsilon_{kls}\mathcal{I}^{+}_s\right)\\
&=-\delta_{ij}\mathcal{G}^{ +}+\varepsilon_{ijs}\mathcal{\hat{I}}^{+}_s.
\end{split}\end{equation}
Similarly, we have $\mathcal{\hat{I}}^{-}_i\mathcal{\hat{I}}^{-}_j=-\delta_{ij}\mathcal{G}^{ -}+\varepsilon_{ijk}\mathcal{\hat{I}}^{-}_k$. \\
(2) Obviously, conclusion (2) holds by the third equation (\ref{567}).
\end{proof}
Take
\begin{equation}\label{ttnnb}
\mathcal{K}_i(\zeta_1,\zeta_2):=\hat{\mathcal{I}}^{+}_i+\hat{\mathcal{I}}^{-}_i=\sum_{l=1}^{3}\left( t_{il}\mathcal{I}_l^{+}+s_{il}\mathcal{I}_l^{-}\right),\ i=1,2,3.\end{equation}
For simplicity, we denote $\mathcal{K}_i(\zeta_1,\zeta_2)$ by $\mathcal{K}_i$.
Then by \eqref{56700},\eqref{2201}, \eqref{2202} and \eqref{22601}, \eqref{ttnnb} can be rewritten as
$$\left(\begin{array}{c}\mathcal{K}_1\\ \mathcal{K}_2\\\mathcal{K}_3\end{array}\right)
=\frac{1}{2}T\left(\begin{array}{c}\mathcal{I}_{1}+\mathcal{I}_{2}\mathcal{I}_{3} \\ \mathcal{I}_{2}+\mathcal{I}_{3}\mathcal{I}_{1}\\ \mathcal{I}_{3}+\mathcal{I}_{1}\mathcal{I}_{2}\end{array}\right)
+\frac{1}{2}S\left(\begin{array}{c}-\mathcal{I}_{1}+\mathcal{I}_{2}\mathcal{I}_{3} \\ -\mathcal{I}_{2}+\mathcal{I}_{3}\mathcal{I}_{1}\\- \mathcal{I}_{3}+\mathcal{I}_{1}\mathcal{I}_{2}\end{array}\right).$$
Now we can prove Theorem \ref{6666661}.\\
{\it Proof of Theorem \ref{6666661} }
By the definition of $\mathcal{K}_i$ in (\ref{ttnnb}) and Lemma \ref{wke11}, one checks directly that

\begin{equation}\label{ttnnb00}
\mathcal{K}_i^2=-\operatorname{Id}(i=1,2,3), \quad \mathcal{K}_i \mathcal{K}_j=-\mathcal{K}_j \mathcal{K}_i(i \neq j),
\end{equation}
and
\begin{equation}\label{ttnnb01}
\left\langle\mathcal{K}_i \cdot, \mathcal{K}_i \cdot\right\rangle=\langle\cdot, \cdot\rangle .
\end{equation}
Hence $\left(\mathcal{K}_1, \mathcal{K}_2, \mathcal{K}_3\right)$ forms an almost generalized Clifford structure.

We now prove that these three structures $\mathcal{K}_1, \mathcal{K}_2, \mathcal{K}_3$ are simultaneously integrable.
Define
$$
\alpha_i^{l}:=\frac{1}{2}\left(t_{il}-s_{il}\right), \quad \beta_i^l:=\frac{1}{2}\left(t_{il}+s_{il}\right).
$$
With this notation, $\mathcal{K}_i$ can be written as the linear combination
$$
\mathcal{K}_i=\sum_{l=1}^{3} \left(\alpha_i^l\mathcal{I}_l+\beta_i^l \mathcal{J}_l \right).
$$
By the bilinearity of Nijenhuis tensor, we have
\begin{equation}\label{t01}\begin{split}
\mathcal{N}\left(\mathcal{K}_i,\mathcal{K}_j\right)=\sum_{l,m=1}^{3}& \left(\alpha_i^l \alpha_j^m\mathcal{N}\left(\mathcal{I}_l,\mathcal{I}_m\right)+\alpha_i^l \beta_j^m \mathcal{N}\left(\mathcal{I}_l, \mathcal{J}_m\right)\right.\\
&\left.+\beta_i^l \alpha_j^m \mathcal{N}\left(\mathcal{J}_l, \mathcal{I}_m\right)+\beta_i^l\beta_j^m\mathcal{N}\left(\mathcal{J}_l, \mathcal{J}_m\right)\right).
\end{split}\end{equation}
By Theorem~\ref{pullup001}, all terms in \eqref{t01} vanish except possibly the diagonal mixed terms. Write
\[
\Theta:=\mathcal N(\mathcal I_1,\mathcal J_1)
=\mathcal N(\mathcal I_2,\mathcal J_2)
=\mathcal N(\mathcal I_3,\mathcal J_3).
\]
Using the symmetry of the mixed Nijenhuis concomitant, \eqref{t01} reduces to
\[
\mathcal N(\mathcal K_i,\mathcal K_j)
=
\left(\sum_{l=1}^3
\bigl(\alpha_i^l\beta_j^l+\beta_i^l\alpha_j^l\bigr)\right)\Theta.
\]
Since the rows of $T$ and $S$ are orthonormal,
\[
\sum_{l=1}^3
\bigl(\alpha_i^l\beta_j^l+\beta_i^l\alpha_j^l\bigr)
=
\frac12\left(\sum_{l=1}^3t_{il}t_{jl}
-\sum_{l=1}^3s_{il}s_{jl}\right)
=
\frac12(\delta_{ij}-\delta_{ij})=0.
\]
Therefore
\begin{equation}\label{kjvd11}
\mathcal{N}(\mathcal{K}_i,\mathcal{K}_j)=0,\qquad i,j=1,2,3.
\end{equation}
Thus $\mathcal{K}_1,\mathcal{K}_2,\mathcal{K}_3$ are simultaneously integrable. Together with \eqref{ttnnb00}--\eqref{ttnnb01}, this shows that
$\left(\mathcal{K}_1,\mathcal{K}_2,\mathcal{K}_3\right)$ forms a generalized Clifford structure.
\qed

Before proving Theorem~1.3, we record the differential identity that controls the parameter dependence of the family $\hat{\mathcal I}(\zeta_1,\zeta_2)$.  This allows us to verify integrability directly through the generalized Nijenhuis tensor.

{\it Proof of Theorem \ref{lbnl1}}.
Let
\[
S:=S^2_{\zeta_1}\times S^2_{\zeta_2}
\simeq \mathbb{CP}^1\times\mathbb{CP}^1,
\qquad
Z=M\times S.
\]
Then
\[
\mathbb T Z\simeq\mathbb T M\oplus\mathbb T S.
\]
We use the Dorfman bracket
\begin{equation}\label{eq:Dorfman-twistor}
[X+\alpha,Y+\beta]_D
=
[X,Y]+\mathcal L_X\beta-\iota_Yd\alpha,
\end{equation}
where $X,Y\in\Gamma(TZ)$ and $\alpha,\beta\in\Omega^1(Z)$.

Write
\begin{equation}\label{eq:Ihat-family-twistor}
\hat{\mathcal I}(\zeta_1,\zeta_2)
=
\vec c(\zeta_1)\cdot\vec{\mathcal I}^{(+)}
+
\vec d(\zeta_2)\cdot\vec{\mathcal I}^{(-)},
\end{equation}
where
$
\vec{\mathcal I}^{(\pm)}
=
(\mathcal I_1^{(\pm)},\mathcal I_2^{(\pm)},\mathcal I_3^{(\pm)})$
and
\begin{equation}\label{eq:stereographic-vectors-twistor}
\begin{split}
\vec c(\zeta_1)
&=
\left(
\frac{1-|\zeta_1|^2}{1+|\zeta_1|^2},
\frac{\mathbf i(\bar\zeta_1-\zeta_1)}{1+|\zeta_1|^2},
-\frac{\zeta_1+\bar\zeta_1}{1+|\zeta_1|^2}
\right),\\
\vec d(\zeta_2)
&=
\left(
\frac{1-|\zeta_2|^2}{1+|\zeta_2|^2},
\frac{\mathbf i(\bar\zeta_2-\zeta_2)}{1+|\zeta_2|^2},
-\frac{\zeta_2+\bar\zeta_2}{1+|\zeta_2|^2}
\right).
\end{split}
\end{equation}

We first record the parameter-space identity that will be used below.
Let
\[
Y=(Y_1,Y_2)\in TS,\qquad
u=d\vec c(Y_1),\qquad
v=d\vec d(Y_2).
\]
Since $\vec c$ and $\vec d$ take values in the unit sphere,
\[
u\cdot\vec c=0,\qquad v\cdot\vec d=0.
\]
Under the stereographic identification
$\mathbb{CP}^1\simeq S^2$, the canonical complex structure
$J_{\mathbb{CP}^1}$ corresponds to the standard complex structure on
the unit sphere given by
\[
J^{S^2}_{p}(w)=p\times w,
\qquad
w\in T_pS^2.
\]
Consequently, for the stereographic parametrizations
$\vec c$ and $\vec d$ used above,
\[
d\vec c(J_SY_1)
=
\vec c\times d\vec c(Y_1),
\qquad
d\vec d(J_SY_2)
=
\vec d\times d\vec d(Y_2).
\]
Using the quaternionic multiplication rule for the two sectors and the
fact that the $(+)$ and $(-)$ sectors annihilate each other, we obtain
\begin{align*}
(d_Y\hat{\mathcal I})\hat{\mathcal I}
&=
(u\times\vec c)\cdot\vec{\mathcal I}^{(+)}
+
(v\times\vec d)\cdot\vec{\mathcal I}^{(-)}\\
&=
-(\vec c\times u)\cdot\vec{\mathcal I}^{(+)}
-(\vec d\times v)\cdot\vec{\mathcal I}^{(-)}\\
&=
-d_{J_SY}\hat{\mathcal I}.
\end{align*}
Thus
\begin{equation}\label{eq:twistor-CR}
(d_Y\hat{\mathcal I})\hat{\mathcal I}
=
-d_{J_SY}\hat{\mathcal I}.
\end{equation}
Since $\hat{\mathcal I}^2=-\operatorname{Id}$, differentiation gives
\[
(d_Y\hat{\mathcal I})\hat{\mathcal I}
+
\hat{\mathcal I}(d_Y\hat{\mathcal I})=0.
\]
Combining this with \eqref{eq:twistor-CR}, we also have
\begin{equation}\label{eq:twistor-CR2}
\hat{\mathcal I}(d_Y\hat{\mathcal I})
=
d_{J_SY}\hat{\mathcal I}.
\end{equation}

We now prove that the generalized Nijenhuis tensor
$N_{\hat{\mathbb I}}$ vanishes. Since $N_{\hat{\mathbb I}}$ is tensorial,
it is enough to work at a fixed point $(p,s)\in M\times S$ and choose
convenient local extensions. In particular, sections
$A,B\in\mathbb T_pM$ may be extended locally so that they are independent
of the $S$--variables.

According to the decomposition
$\mathbb T Z=\mathbb T M\oplus\mathbb T S,$
it suffices to consider the following three types of terms.

\medskip
\noindent\textbf{(1) The $S$--$S$ term.}
For $\alpha,\beta\in\mathbb T S$, the restriction of
$\hat{\mathbb I}$ to $\mathbb T S$ is $\mathcal J$. Hence
\begin{equation}\label{key001}
N_{\hat{\mathbb I}}(\alpha,\beta)
=
N_{\mathcal J}(\alpha,\beta)=0,
\end{equation}
because $J_S$ is integrable.

\medskip
\noindent\textbf{(2) The mixed $S$--$M$ term.}
Let $A=X+\xi\in\mathbb T M$ be chosen independent of the
$S$--variables, and let $Y\in TS$. Then
$[Y,A]_D=[J_SY,A]_D=0.$
To see explicitly how the parameter dependence of $\hat{\mathcal I}$
enters the Dorfman bracket, write locally
$\hat{\mathcal I}
=
\begin{pmatrix}
P&Q\\
R&T
\end{pmatrix}.$
Then
$
\hat{\mathcal I}A
=
(PX+Q\xi)+(RX+T\xi).
$
By \eqref{eq:Dorfman-twistor}, we have
\begin{align*}
[Y,\hat{\mathcal I}A]_D
&=
[Y,PX+Q\xi]+\mathcal L_Y(RX+T\xi)\\
&=
(d_YP)X+(d_YQ)\xi+(d_YR)X+(d_YT)\xi\\
&=
(d_Y\hat{\mathcal I})A,
\end{align*}
because $X$ and $\xi$ are independent of the $S$--variables. Similarly,
\[
[J_SY,\hat{\mathcal I}A]_D
=
(d_{J_SY}\hat{\mathcal I})A.
\]
Since $\hat{\mathbb I}Y=J_SY$, the definition of the generalized
Nijenhuis tensor yields
\begin{equation}\label{key002a}\begin{split}
N_{\hat{\mathbb I}}(Y,A)
&=
[J_SY,\hat{\mathcal I}A]_D
-\hat{\mathbb I}[J_SY,A]_D
-\hat{\mathbb I}[Y,\hat{\mathcal I}A]_D
-[Y,A]_D\\
&=
(d_{J_SY}\hat{\mathcal I})A
-\hat{\mathcal I}(d_Y\hat{\mathcal I})A\\
&=0
\end{split}\end{equation}
by \eqref{eq:twistor-CR2}.
For the reversed order, the Dorfman bracket gives
\[
[\hat{\mathcal I}A,Y]_D
=-(d_Y\hat{\mathcal I})A,
\qquad
[\hat{\mathcal I}A,J_SY]_D
=-(d_{J_SY}\hat{\mathcal I})A,
\]
while
\[
[A,Y]_D=[A,J_SY]_D=0.
\]
Therefore
\begin{equation}\label{key002b}\begin{split}
N_{\hat{\mathbb I}}(A,Y)
&=
[\hat{\mathcal I}A,J_SY]_D
-\hat{\mathbb I}[\hat{\mathcal I}A,Y]_D
-\hat{\mathbb I}[A,J_SY]_D
-[A,Y]_D\\
&=
-(d_{J_SY}\hat{\mathcal I})A
+\hat{\mathcal I}(d_Y\hat{\mathcal I})A\\
&=0
\end{split}\end{equation}
by \eqref{eq:twistor-CR2}.

If $\eta\in T^*S$ and $A\in\mathbb T M$, then $\eta$ and
$\hat{\mathbb I}\eta=-J_S^T\eta$ are pulled back from $S$. Since the
anchors of $A$ and $\hat{\mathcal I}A$ are tangent to $M$, while
$d\eta$ and $d(J_S^T\eta)$ have only $S$--components, the Dorfman formula
shows directly that
\begin{equation}\label{key003}
N_{\hat{\mathbb I}}(\eta,A)
=
N_{\hat{\mathbb I}}(A,\eta)=0.
\end{equation}
Thus all mixed terms vanish.

\medskip
\noindent\textbf{(3) The $M$--$M$ term.}
Let $A=X_A+\xi_A,\
B=X_B+\xi_B$
be independent of the $S$--variables. For every fixed
$s=(\zeta_1,\zeta_2)\in S$, Theorem~\ref{6666661} gives
\begin{equation}\label{key004}
N_{\hat{\mathcal I}(s)}(A,B)=0.
\end{equation}
Hence the $\mathbb T M$--component of
$N_{\hat{\mathbb I}}(A,B)$ vanishes.

It remains to check the possible $T^*S$--component caused by the
$S$--dependence of $\hat{\mathcal I}$. Let
$a=U+\mu,\  b=V+\nu$
be sections of the pull-back of $\mathbb T M$, and let $W\in TS$.
Evaluating the Dorfman bracket on $W$ gives
\begin{align}
\bigl([a,b]_D\bigr)_{T^*S}(W)
&=
(\mathcal L_U\nu-\iota_Vd\mu)(W)\notag\\
&=
\nu(d_WU)+(d_W\mu)(V)\notag\\
&=
2\langle d_Wa,b\rangle .
\label{eq:vertical-bracket-S}
\end{align}
Applying \eqref{eq:vertical-bracket-S} to the four terms in the
generalized Nijenhuis tensor, and using that $A$ and $B$ are independent
of $S$, gives
\begin{align*}
\bigl(N_{\hat{\mathbb I}}(A,B)\bigr)_{T^*S}(W)
={}&
2\left\langle
(d_W\hat{\mathcal I})A,\hat{\mathcal I}B
\right\rangle+
2\left\langle
(d_{J_SW}\hat{\mathcal I})A,B
\right\rangle .
\end{align*}
Since $\hat{\mathcal I}$ is orthogonal and
$\hat{\mathcal I}^2=-\operatorname{Id}$,
\[
\left\langle
(d_W\hat{\mathcal I})A,\hat{\mathcal I}B
\right\rangle
=
\left\langle
(d_W\hat{\mathcal I})\hat{\mathcal I}A,B
\right\rangle .
\]
Using \eqref{eq:twistor-CR}, we obtain
\[
\left\langle
(d_W\hat{\mathcal I})A,\hat{\mathcal I}B
\right\rangle
=
-\left\langle
(d_{J_SW}\hat{\mathcal I})A,B
\right\rangle .
\]
Therefore
\[
\bigl(N_{\hat{\mathbb I}}(A,B)\bigr)_{T^*S}=0,
\]
and hence
\begin{equation}\label{key005}
N_{\hat{\mathbb I}}(A,B)=0.
\end{equation}

Combining the three cases, (\ref{key001})-(\ref{key005}) yield that
\[
N_{\hat{\mathbb I}}=0.
\]
Therefore $\hat{\mathbb I}$ is integrable.
\qed

\section{T-duality and rank-3 generalized Clifford structures}
The formulation of T-duality used here follows the generalized-geometric approach of
Cavalcanti and Gualtieri \cite{CavalcantiGualtieri2010TD}, building on the topological
T-duality framework for circle and torus bundles with $H$-flux developed in
\cite{BouwknegtEvslinMathai2004,BouwknegtHannabussMathai2005}.
In this section, we show that rank-3 twisted generalized Clifford structures are stable under
T-duality. More precisely, we prove that T-duality preserves not only the original Clifford triple
$(\mathcal{I}_1,\mathcal{I}_2,\mathcal{I}_3)$, but also the induced structures $(\mathcal{J}_1,\mathcal{J}_2,\mathcal{J}_3,\mathcal{G})$ and the associated
Spin(3)-rotated family $(\mathcal{K}_1(\zeta_1,\zeta_2),\mathcal{K}_2(\zeta_1,\zeta_2),\mathcal{K}_3(\zeta_1,\zeta_2))$.
This shows that the Clifford data and the rotated family underlying the twistor construction
are compatible with the natural symmetry of generalized geometry given by T-duality.

Let $(M,H)$ and $(\widetilde M,\widetilde H)$ be a pair of T-dual manifolds. We shall use the
following standard formulation of T-duality in generalized geometry: there exists an orthogonal
bundle isomorphism
\[
\Phi:\mathbb{T}M=TM\oplus T^*M \longrightarrow \mathbb{T}\widetilde M
  =T\widetilde M\oplus T^*\widetilde M
\]
such that
\begin{equation}\label{TD-1}
\langle \Phi(A),\Phi(B)\rangle=\langle A,B\rangle,
\end{equation}
and
\begin{equation}\label{TD-2}
\Phi\bigl([ A,B]_H\bigr)=[\Phi(A),\Phi(B)]_{\widetilde H},
\qquad  A,B\in \Gamma(\mathbb{T}M).
\end{equation}
Here $[\cdot,\cdot]_H$ and $[\cdot,\cdot]_{\widetilde H}$ denote the $H$-twisted and
$\widetilde H$-twisted Dorfman brackets, respectively.

Condition \eqref{TD-2} implies that any generalized-geometric structure defined in terms of the
natural pairing and the twisted Dorfman bracket can be transported through $\Phi$.
In particular, for any bundle endomorphism $\mathcal{I}\in \Gamma(End(\mathbb{T}M))$, we define its
T-dual transform by
\[
\widetilde{\mathcal{I}}=\Phi \mathcal{I}\Phi^{-1}\in \Gamma(End(\mathbb{T}\widetilde M)).
\]

\begin{lem}\label{lem:TD-Nijenhuis}
Let $\mathcal{I},\mathcal{J}\in \Gamma(End(\mathbb{T}M))$. Then the mixed generalized Nijenhuis tensor is
preserved by T-duality in the sense that
\[
\mathcal N_{\widetilde H}(\widetilde{\mathcal{I}},\widetilde {\mathcal{J}})(\Phi A,\Phi B)
=
\Phi\Bigl(\mathcal N_H(\mathcal{I},\mathcal{J})(A,B)\Bigr),
\qquad A,B\in \Gamma(\mathbb{T}M).
\]
In particular,
\[
\mathcal N_H(\mathcal{I},\mathcal{J})=0 \quad \Longrightarrow\quad
\mathcal N_{\widetilde H}(\widetilde{\mathcal{I}},\widetilde {\mathcal{J}})=0.
\]
\end{lem}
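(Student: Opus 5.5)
The plan is a direct substitution into the definition \eqref{mnvf} of the mixed Nijenhuis concomitant. The definition should be read with the $H$-twisted Dorfman bracket on $M$ and the $\widetilde H$-twisted bracket on $\widetilde M$. It involves only three kinds of operations: the bracket, composition with the endomorphisms, and addition. So it suffices to show that each of the eight terms on the $\widetilde M$ side, evaluated at $(\Phi A,\Phi B)$, equals $\Phi$ applied to the corresponding term on the $M$ side at $(A,B)$.

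First I will record the two basic intertwining facts. From $\widetilde{\mathcal I}=\Phi\mathcal I\Phi^{-1}$ we get $\widetilde{\mathcal I}\Phi=\Phi\mathcal I$, and likewise $\widetilde{\mathcal J}\Phi=\Phi\mathcal J$, $\widetilde{\mathcal I}\widetilde{\mathcal J}\Phi=\Phi\mathcal I\mathcal J$ and $\widetilde{\mathcal J}\widetilde{\mathcal I}\Phi=\Phi\mathcal J\mathcal I$. Combining this with \eqref{TD-2}, we obtain for instance
\[
[\widetilde{\mathcal I}\Phi A,\widetilde{\mathcal J}\Phi B]_{\widetilde H}=[\Phi(\mathcal I A),\Phi(\mathcal J B)]_{\widetilde H}=\Phi[\mathcal I A,\mathcal J B]_H,
\]
and
\[
\widetilde{\mathcal I}[\Phi A,\widetilde{\mathcal J}\Phi B]_{\widetilde H}=\widetilde{\mathcal I}\Phi[A,\mathcal J B]_H=\Phi\mathcal I[A,\mathcal J B]_H.
\]
The terms $\widetilde{\mathcal I}\widetilde{\mathcal J}[\Phi A,\Phi B]_{\widetilde H}$ and $\widetilde{\mathcal J}\widetilde{\mathcal I}[\Phi A,\Phi B]_{\widetilde H}$ are handled the same way. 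Summing the eight terms and using the linearity of $\Phi$ gives the displayed identity.

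For the ``in particular'' statement, I will use that $\Phi$ is a bundle isomorphism, so every section of $\mathbb T\widetilde M$ has the form $\Phi A$ with $A\in\Gamma(\mathbb TM)$, namely $A=\Phi^{-1}\widetilde A$. If $\mathcal N_H(\mathcal I,\mathcal J)=0$, the identity then shows that $\mathcal N_{\widetilde H}(\widetilde{\mathcal I},\widetilde{\mathcal J})$ vanishes on all pairs of sections.

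The only real subtlety is that \eqref{TD-2} must be understood as a statement about sections, with $\Phi$ acting on $\Gamma(\mathbb TM)$. In the Cavalcanti--Gualtieri setting this means invariant sections on the correspondence space, so the identity will be stated for the sections on which $\Phi$ is defined. Because both sides are tensorial, this class of sections is enough. Orthogonality \eqref{TD-1} is not needed for the identity itself; it only guarantees that $\widetilde{\mathcal I}$ and $\widetilde{\mathcal J}$ remain orthogonal when $\mathcal I$ and $\mathcal J$ are.
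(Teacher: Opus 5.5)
Your proposal is correct and follows the same argument as the paper's, written out term by term: conjugation gives $\widetilde{\mathcal I}\Phi=\Phi\mathcal I$ and $\widetilde{\mathcal J}\Phi=\Phi\mathcal J$, \eqref{TD-2} carries each bracket through $\Phi$, and surjectivity of $\Phi$ on sections gives the ``in particular'' clause. One caution about your closing aside, which the paper's framework does not need because there $\Phi$ acts on all of $\Gamma(\mathbb{T}M)$: the mixed concomitant of arbitrary endomorphisms is not $C^\infty$-linear in its first argument. Even for orthogonal $\mathcal I,\mathcal J$ with $\mathcal I^2=\mathcal J^2=-\mathrm{Id}$ one gets $\mathcal N(\mathcal I,\mathcal J)(fA,B)-f\,\mathcal N(\mathcal I,\mathcal J)(A,B)=\langle A,B\rangle(\mathcal I\mathcal J+\mathcal J\mathcal I)(df)-\langle A,(\mathcal I\mathcal J+\mathcal J\mathcal I)B\rangle\,df$, which vanishes only when $\mathcal I\mathcal J+\mathcal J\mathcal I$ is a multiple of the identity. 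So ``both sides are tensorial'' cannot be used to reduce to invariant sections in that generality.
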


\begin{proof}
This follows directly from the definition of the generalized Nijenhuis concomitant and the fact
that $\Phi$ intertwines the twisted Dorfman brackets:
\[
\Phi([A,B]_H)=[\Phi(A),\Phi(B)]_{\widetilde H}.
\]
Indeed, every term in the definition of $\mathcal N(\mathcal{I},\mathcal{J})$ is functorial under conjugation by
$\Phi$, and therefore the whole expression is transported to the corresponding one on
$\mathbb{T}\widetilde M$.
\end{proof}

We now show that T-duality preserves the class of rank-3 twisted generalized Clifford manifolds.
\begin{prop}\label{thm:TD-Clifford}
Let $(M,H)$ and $(\widetilde M,\widetilde H)$ be T-dual manifolds, and let
\[
\Phi:(\mathbb{T}M,[\cdot,\cdot]_H)\longrightarrow
(\mathbb{T}\widetilde M,[\cdot,\cdot]_{\widetilde H})
\]
be an orthogonal Courant algebroid isomorphism.
Assume that $(\mathcal{I}_1,\mathcal{I}_2,\mathcal{I}_3)$ is an $H$-twisted rank-3 generalized Clifford structure on $M$.
Define
\[
\widetilde{\mathcal{I}}_a:=\Phi \mathcal{I}_a\Phi^{-1},\qquad a=1,2,3.
\]
Then $(\widetilde{\mathcal I}_1,\widetilde{\mathcal I}_2,\widetilde{\mathcal I}_3)$ is a
$\widetilde H$-twisted rank-3 generalized Clifford structure on $\widetilde M$.
\end{prop}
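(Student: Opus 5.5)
The plan is to check, one at a time, the three defining conditions of Definition~\ref{gcmfd} for the conjugated triple $(\widetilde{\mathcal I}_1,\widetilde{\mathcal I}_2,\widetilde{\mathcal I}_3)$: orthogonality, the Clifford relations \eqref{ogcl110}, and the integrability conditions \eqref{ogcl111} with respect to the $\widetilde H$-twisted Dorfman bracket. The first two are purely algebraic and follow from conjugation by $\Phi$. The third is supplied by Lemma~\ref{lem:TD-Nijenhuis}.

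\textbf{Orthogonality.} For $\widetilde A,\widetilde B\in\mathbb T\widetilde M$, write $\widetilde A=\Phi A$ and $\widetilde B=\Phi B$. Then by \eqref{TD-1} and the orthogonality of $\mathcal I_a$,
\[
\langle\widetilde{\mathcal I}_a\widetilde A,\widetilde{\mathcal I}_a\widetilde B\rangle
=\langle\Phi\mathcal I_aA,\Phi\mathcal I_aB\rangle
=\langle\mathcal I_aA,\mathcal I_aB\rangle
=\langle A,B\rangle
=\langle\widetilde A,\widetilde B\rangle .
\]
So each $\widetilde{\mathcal I}_a$ is orthogonal.

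\textbf{Clifford relations.} Products conjugate termwise:
\[
\widetilde{\mathcal I}_a\widetilde{\mathcal I}_b+\widetilde{\mathcal I}_b\widetilde{\mathcal I}_a
=\Phi(\mathcal I_a\mathcal I_b+\mathcal I_b\mathcal I_a)\Phi^{-1}
=-2\delta_{ab}\,\mathrm{Id}.
\]
Taking $a=b$ gives $\widetilde{\mathcal I}_a^2=-\mathrm{Id}$. Hence each $\widetilde{\mathcal I}_a$ is an almost generalized complex structure, and the triple satisfies \eqref{ogcl110}.

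\textbf{Integrability.} Apply Lemma~\ref{lem:TD-Nijenhuis} with $\mathcal I=\mathcal J=\mathcal I_a$. Since $\mathcal N_H(\mathcal I_a,\mathcal I_a)=0$, we get $\mathcal N_{\widetilde H}(\widetilde{\mathcal I}_a,\widetilde{\mathcal I}_a)(\Phi A,\Phi B)=0$ for all sections $A,B$ of $\mathbb TM$.

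The main point needing care is that $\Phi$ is invertible on sections. T-duality for torus bundles is usually realized by $\Phi$ acting on invariant sections, so "all sections $\Phi A$" may mean only invariant sections. I would handle this in two steps.

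First, argue that $\mathcal N_{\widetilde H}(\widetilde{\mathcal I}_a,\widetilde{\mathcal I}_a)$ is tensorial. For an almost generalized complex structure the Nijenhuis tensor is $C^\infty$-linear, because the anomaly terms coming from the anchor and from $\mathcal D\langle\cdot,\cdot\rangle$ cancel when $\widetilde{\mathcal I}_a^2=-\mathrm{Id}$ and $\widetilde{\mathcal I}_a$ is orthogonal.

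Second, use that the images $\Phi A$ span $\mathbb T\widetilde M$ pointwise. Together with tensoriality, vanishing on all $\Phi A,\Phi B$ then forces vanishing on all sections.

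If instead $\Phi$ is taken to be a genuine isomorphism of Courant algebroids, as the hypotheses of the proposition say, this step is immediate. Altogether, $(\widetilde{\mathcal I}_1,\widetilde{\mathcal I}_2,\widetilde{\mathcal I}_3)$ is an $\widetilde H$-twisted rank-3 generalized Clifford structure.
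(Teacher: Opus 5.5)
Your proposal is correct and follows the paper's proof: conjugation by $\Phi$ transports orthogonality and the Clifford relations, and Lemma~\ref{lem:TD-Nijenhuis} with $\mathcal I=\mathcal J=\mathcal I_a$ carries $\mathcal N_H(\mathcal I_a,\mathcal I_a)=0$ over to $\mathcal N_{\widetilde H}(\widetilde{\mathcal I}_a,\widetilde{\mathcal I}_a)=0$. Your extra tensoriality argument for the case where $\Phi$ acts only on invariant sections is sound, since the anchor and $\mathcal D\langle\cdot,\cdot\rangle$ anomaly terms cancel for an orthogonal $\widetilde{\mathcal I}_a$ with $\widetilde{\mathcal I}_a^2=-\mathrm{Id}$; the paper does not need it, because it assumes $\Phi$ intertwines the brackets on all sections.
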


\begin{proof}
Since each $\mathcal{I}_a$ is orthogonal and satisfies $\mathcal{I}_a^2=-{\rm Id}$, and since $\Phi$ is orthogonal, each
$\widetilde{\mathcal I}_a$ is again an orthogonal almost generalized complex structure on
$\mathbb{T}\widetilde M$:
\[
\widetilde{\mathcal{I}}_a^2=\Phi \mathcal{I}_a^2\Phi^{-1}=-{\rm Id}.
\]
Moreover, the Clifford relations are preserved:
\[
\widetilde{\mathcal{I}}_a\widetilde{\mathcal{I}}_b+\widetilde{\mathcal{I}}_b\widetilde{\mathcal{I}}_a
=
\Phi(\mathcal{I}_a\mathcal{I}_b+\mathcal{I}_b\mathcal{I}_a)\Phi^{-1}
=
-2\delta_{ab}{\rm Id}.
\]
Finally, since each $\mathcal I_a$ is integrable with respect to the $H$-twisted Dorfman bracket,
Lemma~\ref{lem:TD-Nijenhuis} gives
\[
\mathcal N_{\widetilde H}(\widetilde{\mathcal I}_a,\widetilde{\mathcal I}_a)=0,
\qquad a=1,2,3.
\]
Hence $(\widetilde{\mathcal I}_1,\widetilde{\mathcal I}_2,\widetilde{\mathcal I}_3)$ defines a
$\widetilde H$-twisted rank-3 generalized Clifford structure on $\widetilde M$.
\end{proof}

We now show that these induced structures \eqref{coincide00000} are also compatible with T-duality.
\begin{prop}\label{prop:TD-induced}
With the notation \eqref{coincide00000}, define
\begin{equation}\label{coincide000001}
\widetilde{\mathcal{J}}_i:=\frac12\varepsilon_{ijk}\widetilde{\mathcal{I}}_j\widetilde{\mathcal{I}}_k,
\qquad
\widetilde{\mathcal{G}}:=-\widetilde{\mathcal{I}}_1\widetilde{\mathcal{I}}_2\widetilde{\mathcal{I}}_3.
\end{equation}
Then
\[
\widetilde{\mathcal{J}}_i=\Phi \mathcal{J}_i\Phi^{-1},
\qquad
\widetilde{\mathcal{G}}=\Phi \mathcal{G}\Phi^{-1}.
\]
\end{prop}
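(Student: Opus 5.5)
The proof is a direct algebraic computation. Conjugation by the bundle isomorphism $\Phi$ is an algebra homomorphism of $\Gamma(\mathrm{End})$, so it commutes with forming products of the $\mathcal I_a$. I would first record the elementary identity
\[
\Phi(\mathcal A\mathcal B)\Phi^{-1}=(\Phi\mathcal A\Phi^{-1})(\Phi\mathcal B\Phi^{-1})
\]
for arbitrary $\mathcal A,\mathcal B\in\Gamma(\mathrm{End}(\mathbb TM))$. It follows immediately by inserting $\Phi^{-1}\Phi=\mathrm{Id}_{\mathbb TM}$ between $\mathcal A$ and $\mathcal B$. By induction it extends to products of any number of factors, and by linearity to real linear combinations of such products.

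Next I would apply this to the definitions \eqref{coincide000001}. For $\widetilde{\mathcal J}_i$ one gets
\[
\widetilde{\mathcal J}_i=\frac12\varepsilon_{ijk}(\Phi\mathcal I_j\Phi^{-1})(\Phi\mathcal I_k\Phi^{-1})
=\Phi\Bigl(\frac12\varepsilon_{ijk}\mathcal I_j\mathcal I_k\Bigr)\Phi^{-1}=\Phi\mathcal J_i\Phi^{-1},
\]
using that the constants $\varepsilon_{ijk}$ commute with $\Phi$. Similarly, the three-fold product gives
\[
\widetilde{\mathcal G}=-\Phi\mathcal I_1\Phi^{-1}\Phi\mathcal I_2\Phi^{-1}\Phi\mathcal I_3\Phi^{-1}=\Phi(-\mathcal I_1\mathcal I_2\mathcal I_3)\Phi^{-1}=\Phi\mathcal G\Phi^{-1}.
\]

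As a consistency remark, I would note that the alternative expressions $\mathcal G=-\mathcal I_i\mathcal J_i$ in \eqref{coincide00000} are transported as well. Hence the tilde structures satisfy the relations \eqref{coincide} on $\mathbb T\widetilde M$, and in particular $\widetilde{\mathcal G}^2=\mathrm{Id}$. I also expect to mention that Lemma~\ref{lem:TD-Nijenhuis} then transfers the integrability of $\mathcal J_i$, established via Theorem~\ref{pullup001}, to $\widetilde{\mathcal J}_i$, although this is not needed for the stated identities.

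There is no real obstacle. The only point requiring care is to keep $\Phi$ a fiberwise linear bundle isomorphism, as opposed to a map on sections involving derivatives. Purely algebraic identities among endomorphisms are then transported pointwise, and no bracket compatibility \eqref{TD-2} is required.
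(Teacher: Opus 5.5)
Your proposal is correct and follows the same approach as the paper: both use only that conjugation by $\Phi$ is linear and multiplicative, and apply this directly to the defining formulas for $\mathcal J_i$ and $\mathcal G$. Your observation that only the fiberwise linearity of $\Phi$ is used, and not the bracket compatibility, is accurate; the extra remarks are consistent with the paper but not needed for the statement.
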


\begin{proof}
Using the definition of $\mathcal{J}_i$ in \eqref{coincide00000} and the multiplicativity of conjugation, we obtain
\[
\widetilde{\mathcal{J}}_i
=
\frac12\varepsilon_{ijk}\widetilde{\mathcal{I}}_j\widetilde{\mathcal{I}}_k
=
\frac12\varepsilon_{ijk}(\Phi \mathcal{I}_j\Phi^{-1})(\Phi \mathcal{I}_k\Phi^{-1})
=
\Phi\Bigl(\frac12\varepsilon_{ijk}\mathcal{I}_j\mathcal{I}_k\Bigr)\Phi^{-1}
=
\Phi \mathcal{J}_i\Phi^{-1}.
\]
Similarly,
\[
\widetilde{\mathcal{G}}=-\widetilde{\mathcal{I}}_1\widetilde{\mathcal{I}}_2\widetilde{\mathcal{I}}_3
=
-\Phi \mathcal{I}_1\mathcal{I}_2\mathcal{I}_3\Phi^{-1}
=
\Phi \mathcal{G}\Phi^{-1}.
\]
\end{proof}
As an immediate consequence, all algebraic identities in \eqref{coincide} are preserved by T-duality.
Define similarly
\[
\widetilde{\mathcal{I}}_i^\pm:=\frac12(\widetilde{\mathcal{J}}_i\pm \widetilde{\mathcal{I}}_i),
\qquad
\widetilde{\mathcal{G}}^\pm:=\frac12(Id\pm \widetilde{\mathcal{G}}),
\]
and let
\[
\bigl(\widetilde{\mathcal{K}}_1(\zeta_1,\zeta_2),\widetilde{\mathcal{K}}_2(\zeta_1,\zeta_2),
\widetilde{\mathcal{K}}_3(\zeta_1,\zeta_2)\bigr)
\]
be the corresponding Clifford-rotated family on $\widetilde M$.

\begin{prop}\label{prop:TD-rotation}
For every $(\zeta_1,\zeta_2)\in \mathbb{CP}^1\times \mathbb{CP}^1$, one has
\[
\widetilde{\mathcal{K}}_i(\zeta_1,\zeta_2)=\Phi {\mathcal{K}}_i(\zeta_1,\zeta_2)\Phi^{-1},
\qquad i=1,2,3.
\]
Equivalently, T-duality commutes with the rank-3 Clifford rotations.
\end{prop}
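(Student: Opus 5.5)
The plan is to reduce everything to the fact that conjugation by $\Phi$, $X\mapsto \Phi X\Phi^{-1}$, is an algebra homomorphism of endomorphism bundles. It is linear over real (and, after complexification, complex) scalars, it is multiplicative, and it sends $\mathrm{Id}$ to $\mathrm{Id}$. The rotation matrices $T(\zeta_1)$ and $S(\zeta_2)$ are constant matrices of scalars, one for each fixed $(\zeta_1,\zeta_2)$. They do not depend on the point of $M$ or $\widetilde M$, so conjugation will pass through them.

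The steps, in order, are as follows.

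\textbf{Step 1.} By Proposition~\ref{prop:TD-induced}, $\widetilde{\mathcal J}_i=\Phi\mathcal J_i\Phi^{-1}$ and $\widetilde{\mathcal G}=\Phi\mathcal G\Phi^{-1}$. By linearity of conjugation this gives
\[
\widetilde{\mathcal I}_i^\pm=\tfrac12\bigl(\Phi\mathcal J_i\Phi^{-1}\pm\Phi\mathcal I_i\Phi^{-1}\bigr)=\Phi\mathcal I_i^\pm\Phi^{-1},
\]
and similarly $\widetilde{\mathcal G}^\pm=\Phi\mathcal G^\pm\Phi^{-1}$, using $\Phi\,\mathrm{Id}\,\Phi^{-1}=\mathrm{Id}$.

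\textbf{Step 2.} The rotated family on $\widetilde M$ is defined by the same formula \eqref{ttnnb}, with the same matrices $T(\zeta_1)$ and $S(\zeta_2)$. Hence
\[
\widetilde{\mathcal K}_i=\sum_{l}\bigl(t_{il}\widetilde{\mathcal I}_l^{+}+s_{il}\widetilde{\mathcal I}_l^{-}\bigr)=\Phi\Bigl(\sum_l (t_{il}\mathcal I_l^{+}+s_{il}\mathcal I_l^{-})\Bigr)\Phi^{-1}=\Phi\mathcal K_i\Phi^{-1}.
\]
Equivalently, one can use the closed form from Theorem~\ref{6666661}: each entry $\mathcal I_a\pm\mathcal I_b\mathcal I_c$ conjugates to $\widetilde{\mathcal I}_a\pm\widetilde{\mathcal I}_b\widetilde{\mathcal I}_c$ by multiplicativity.

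\textbf{Step 3.} To justify "equivalently, T-duality commutes with the Clifford rotations," one notes that Step~2 states exactly this. Applying the rotation and then $\Phi$-conjugation gives the same result as $\Phi$-conjugation followed by the rotation. One may add that, by Proposition~\ref{thm:TD-Clifford} together with Theorem~\ref{6666661} applied to the $\widetilde H$-twisted structure, $\widetilde{\mathcal K}_i$ is again a generalized Clifford triple. Alternatively, this follows from Lemma~\ref{lem:TD-Nijenhuis} applied to $\mathcal N(\mathcal K_i,\mathcal K_j)=0$.

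There is no substantial obstacle. The only point to watch is that the scalar coefficients $t_{il}$ and $s_{il}$ are genuinely constants with respect to $\Phi$: they depend only on $(\zeta_1,\zeta_2)$ and not on the base point. One should also check that any complex-valued entries, such as $\mathbf i(\zeta-\bar\zeta)$, are in fact real, so that no complexification issue arises. Both facts follow from the explicit formulas \eqref{2201} and \eqref{2202}.
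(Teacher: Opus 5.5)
Your proposal is correct and follows essentially the same argument as the paper: conjugation by $\Phi$ is linear and multiplicative, so Proposition~\ref{prop:TD-induced} gives $\widetilde{\mathcal I}_i^\pm=\Phi\mathcal I_i^\pm\Phi^{-1}$, and the constant coefficients $t_{il}$, $s_{il}$ pass through the conjugation to yield $\widetilde{\mathcal K}_i=\Phi\mathcal K_i\Phi^{-1}$. The paper organizes this sector by sector, first showing $\widehat{\widetilde{\mathcal I}}_i^{\pm}=\Phi\widehat{\mathcal I}_i^{\pm}\Phi^{-1}$ and then summing; your Step~3 remarks on integrability are not needed for this statement.
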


\begin{proof}
By Proposition~\ref{prop:TD-induced}, we have
\[
\widetilde{\mathcal{I}}_i^\pm
=
\frac12(\widetilde{\mathcal{J}}_i\pm \widetilde{\mathcal{I}}_i)
=
\frac12\Phi({\mathcal{J}}_i\pm{\mathcal{I}}_i)\Phi^{-1}
=
\Phi {\mathcal{I}}_i^\pm \Phi^{-1}.
\]
Hence, using the definition of the rotated structures in Section~4,
\[
\widehat{\widetilde{\mathcal{I}}}_i^{+}
=
\sum_{l=1}^3 t_{il}\widetilde{\mathcal{I}}_l^+
=
\sum_{l=1}^3 t_{il}\Phi \mathcal I_l^+\Phi^{-1}
=
\Phi\Bigl(\sum_{l=1}^3 t_{il}{\mathcal{I}}_l^+\Bigr)\Phi^{-1}
=
\Phi\widehat{\mathcal{I}}_i^{\, +}\Phi^{-1},
\]
and similarly
\[
\widehat{\widetilde{\mathcal{I}}}_i^{-}
=
\Phi\widehat{\mathcal{I}}_i^{\, -}\Phi^{-1},
\]
Therefore,
\[
\widetilde{\mathcal K}_i
=
\widehat{\widetilde{\mathcal I}}_i^{\, +}
+\widehat{\widetilde{\mathcal I}}_i^{\, -}
=
\Phi\Bigl(\widehat{{\mathcal{I}}}_i^{+}+\widehat{{\mathcal{I}}}_i^{-}\Bigr)\Phi^{-1}
=
\Phi \mathcal{K}_i\Phi^{-1}.
\]
This proves the claim.
\end{proof}

Combining Proposition~\ref{thm:TD-Clifford} with Proposition~\ref{prop:TD-rotation}, we obtain:

\begin{cor}\label{cor:TD-twistor-family}
The T-dual of the Spin(3)-rotated family associated with a rank-3 twisted generalized Clifford
structure is exactly the Spin(3)-rotated family associated with the T-dual Clifford triple.
In particular, the construction of the $S^2\times S^2$-family of generalized complex structures
is natural with respect to T-duality.
\end{cor}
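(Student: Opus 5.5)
The statement is Corollary~\ref{cor:TD-twistor-family}. The plan is to assemble it from Propositions~\ref{thm:TD-Clifford}, \ref{prop:TD-induced} and \ref{prop:TD-rotation}, together with Lemma~\ref{lem:TD-Nijenhuis}, and then to check that the identification is compatible with integrability.

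\textbf{Step 1: identify the two families.} By Proposition~\ref{thm:TD-Clifford}, $(\widetilde{\mathcal I}_1,\widetilde{\mathcal I}_2,\widetilde{\mathcal I}_3)$ is an $\widetilde H$-twisted rank-3 generalized Clifford structure on $\widetilde M$. Hence the construction of Section~4 applies to it and produces a rotated family $\widetilde{\mathcal K}_i(\zeta_1,\zeta_2)$. This family is built from the same matrices $T(\zeta_1)$ and $S(\zeta_2)$, through $\widetilde{\mathcal J}_i$, $\widetilde{\mathcal G}$ and $\widetilde{\mathcal I}_i^\pm$. Proposition~\ref{prop:TD-rotation} then gives, for every $(\zeta_1,\zeta_2)\in S^2\times S^2$,
\[
\widetilde{\mathcal K}_i(\zeta_1,\zeta_2)=\Phi\,\mathcal K_i(\zeta_1,\zeta_2)\,\Phi^{-1},\qquad i=1,2,3.
\]
This is precisely the first assertion: the T-dual of the rotated family is the rotated family of the T-dual triple. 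The key point is that the coefficients $t_{il}$ and $s_{il}$ are scalars on the parameter space, so conjugation by $\Phi$ passes through them.

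\textbf{Step 2: check integrability is transported consistently.} For naturality at the level of generalized complex structures, note the following. By Theorem~\ref{6666661}, each $\mathcal K_i(\zeta_1,\zeta_2)$ is integrable for the $H$-twisted bracket; this theorem's argument uses only bilinearity of $\mathcal N$ and Theorem~\ref{pullup001}, and both hold verbatim for the twisted Dorfman bracket. Lemma~\ref{lem:TD-Nijenhuis} then gives $\mathcal N_{\widetilde H}(\widetilde{\mathcal K}_i,\widetilde{\mathcal K}_j)=0$. So the identification $\mathcal K_i\mapsto\Phi\mathcal K_i\Phi^{-1}$ sends the $S^2\times S^2$-family of generalized complex structures on $M$ bijectively onto the corresponding family on $\widetilde M$, compatibly with integrability.

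\textbf{Step 3 (optional): the twistor spaces.} If desired, one can also note that $\Phi\oplus\mathrm{Id}_{\mathbb T(S^2\times S^2)}$ intertwines the twistor structures $\hat{\mathbb I}$ of Theorem~\ref{lbnl1} on $M\times S^2\times S^2$ and $\widetilde M\times S^2\times S^2$. This holds pointwise by Step 1 with $i=1$.

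\textbf{Main obstacle.} The only nontrivial point is Step 2 in the twisted setting. Theorem~\ref{pullup001} and Lemma~\ref{nea000} were stated for the untwisted bracket, so one must confirm that their proofs use only the Leibniz and linearity properties shared by $[\cdot,\cdot]_H$ when $dH=0$. I expect this to be routine: each step in those proofs is a formal manipulation of the bracket. Otherwise the corollary follows by direct combination of the cited propositions.
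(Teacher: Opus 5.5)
Your proposal is correct and follows the paper's own route. The paper obtains the corollary simply by combining Proposition~\ref{thm:TD-Clifford} (so that the rotated family of the T-dual triple is defined) with Proposition~\ref{prop:TD-rotation} (giving $\widetilde{\mathcal K}_i=\Phi\mathcal K_i\Phi^{-1}$, since the $t_{il},s_{il}$ are scalars), which is your Step~1; your Step~2 makes explicit the integrability transport that the paper leaves implicit, and the obstacle you flag is milder than you fear, because the proofs of Lemmas~\ref{nea}, \ref{nea000} and Theorems~\ref{pullup001}, \ref{6666661} use only $\mathbb R$-bilinearity of the bracket and algebraic identities, not Leibniz or $dH=0$. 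Keep your optional Step~3 at the level of endomorphisms: compatibility of $\Phi\oplus\mathrm{Id}$ with the brackets on $M\times S^2\times S^2$ is exactly what the paper leaves open in its closing remark, so it should not be read as a T-duality of twistor spaces.
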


\begin{rem}
Proposition~\ref{thm:TD-Clifford} and Proposition~\ref{prop:TD-rotation} show that the
$M$-component of the twistor generalized complex structure constructed in Theorem~1.3 is
compatible with T-duality. It is therefore natural to expect that, after pulling back the twist to
$M\times S^2\times S^2$, the corresponding generalized complex structure on the twistor space
admits a T-dual interpretation as well. We leave this total-space formulation to future work.
\end{rem}



\end{document}